\newtheorem{thm}{Theorem}[section]
\newtheorem{conj}[thm]{Conjecture}
\newtheorem{lem}[thm]{Lemma}
\newtheorem{defi}[thm]{Definition}
\newtheorem{corr}[thm]{Corollary}
\theoremstyle{remark}
\newtheorem*{remark}{Remark}
\theoremstyle{definition}
\newtheorem*{acknowledgement}{Acknowledgements}
\newcommand{\eps}{\varepsilon}
\newcommand{\csillag}{\sideset{}{^\ast}}
\newcommand{\F}{\mathbb{F}_p}
\begin{document}

\title{New bounds on even cycle creating Hamiltonian paths using expander graphs}

\makeatother
\author{Gergely Harcos}
\author{Daniel Soltész}
\address{Alfréd Rényi Institute of Mathematics, Hungarian Academy of Sciences, POB 127, Budapest H-1364, Hungary}
\email[Gergely Harcos]{gharcos@renyi.hu}
\email[Daniel Soltész]{solteszd@renyi.hu}
\address{MTA Rényi Intézet Lendület Automorphic Research Group}
\email[Gergely Harcos]{gharcos@renyi.hu}
\address{Central European University, Nador u. 9, Budapest H-1051, Hungary}
\email[Gergely Harcos]{harcosg@ceu.edu}
\thanks{First author supported by NKFIH (National Research, Development and Innovation Office) grant K~119528, and by the MTA Rényi Intézet Lendület Automorphic Research Group. Second author supported by NKFIH (National Research, Development and Innovation Office) grants K~108947, K~120706, KH~126853, KH~130371.}

\begin{abstract}
We say that two graphs on the same vertex set are $G$-creating if their union (the union of their edges) contains $G$ as a subgraph. Let $H_n(G)$ be the maximum number of pairwise $G$-creating Hamiltonian paths of $K_n$. Cohen, Fachini and Körner proved
\[n^{\frac{1}{2}n-o(n)}\leq H_n(C_4) \leq n^{\frac{3}{4}n+o(n)}.\]
In this paper we close the superexponential gap between their lower and upper bounds by proving
\[n^{\frac{1}{2}n-\frac{1}{2}\frac{n}{\log{n}}-O(1)}\leq H_n(C_4) \leq n^{\frac{1}{2}n+o\left(\frac{n}{\log{n}} \right)}.\]
We also improve the previously established upper bounds on $H_n(C_{2k})$ for $k>3$, and we present a small improvement on the lower bound of Füredi, Kantor, Monti and Sinaimeri on the maximum number of so-called pairwise reversing permutations. One of our main tools is a theorem of Krivelevich, which roughly states that (certain kinds of) good expanders contain many Hamiltonian paths.
\end{abstract}

\maketitle

\section{Introduction}

There are many results concerning the size of the largest set of permutations that satisfy some prescribed binary relation, see \cites{intsurvey,additive,tintersect,frankldeza,reversefree,colliding,KMS}. There is a natural ($2$-to-$1$) correspondence between permutations of $[n]$ and (undirected) Hamiltonian paths of the complete graph $K_n$. The main questions studied in this paper are among the first natural questions that arise when one considers irreflexive relations between Hamiltonian paths of $K_n$.

\begin{defi}
We say that two graphs on the same vertex set are $G$-creating if their union (the union of their edges) contains $G$ as a not necessarily induced subgraph. Let $H_n(G)$ and $\overline{H_n(G)}$ be the maximum number of pairwise $G$-creating and pairwise non-$G$-creating Hamiltonian paths of $K_n$, respectively.
\end{defi}

The study of $H_n(G)$ for various graphs $G$ was initiated in \cite{komesi}. There it was observed that the maximum number of Hamiltonian paths of $K_n$ such that each pairwise union contains an odd cycle (of unspecified length) is $\binom{n}{\lfloor n/2 \rfloor}$ if $n$ is odd and $\binom{n}{\lfloor n/2 \rfloor}/2$ if $n$ is even. The authors of \cite{komesi} asked whether replacing an odd cycle of unspecified length with a triangle would result in the same answer. This question was answered affirmatively by I. Kovács and the second author in \cite{triangle}.

\begin{thm}[Kovács--Soltész~\cite{triangle}]\label{thm:triangle}
We have
\[H_n(C_3)= \begin{cases} \binom{n}{\left \lfloor \frac{n}{2} \right \rfloor} & \text{when $n$ is odd;} \\
\frac{1}{2}\binom{n}{\left \lfloor \frac{n}{2} \right \rfloor} & \text{when $n$ is even.}\end{cases}\]
\end{thm}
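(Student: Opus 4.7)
\medskip

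\emph{Upper bound.} Since a triangle is an odd cycle, any two $C_3$-creating Hamiltonian paths are a fortiori odd-cycle-creating. Hence $H_n(C_3)$ is bounded above by the maximum number of pairwise odd-cycle-creating Hamiltonian paths in $K_n$, which is exactly the quantity stated in the theorem by the observation from \cite{komesi}. That observation rests on the following fact that I would state and use explicitly: every Hamiltonian path $P$ of $K_n$ has a unique (unordered) bipartition $(A_P,B_P)$ of the vertex set, and $P\cup P'$ is non-bipartite — equivalently, contains an odd cycle — if and only if $\{A_P,B_P\}\neq\{A_{P'},B_{P'}\}$. The number of unordered bipartitions of $[n]$ with part-sizes $\lfloor n/2\rfloor,\lceil n/2\rceil$ is $\binom{n}{\lfloor n/2\rfloor}$ for odd $n$ and $\tfrac12\binom{n}{n/2}$ for even $n$, giving the stated upper bound.

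\medskip

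\emph{Lower bound: strategy.} To match the upper bound I would assign to each subset $S\subseteq[n]$ of size $\lfloor n/2\rfloor$ (in the even case, only one of each complementary pair $\{S,\bar S\}$, e.g.\ the one containing $1$) a Hamiltonian path $P_S$ whose bipartition classes are exactly $S$ and $\bar S$. Any two such paths automatically create some odd cycle in their union, so the whole task is to choose the $P_S$ so \emph{rigidly} that the odd cycle forced in $P_S\cup P_T$ is always of length $3$. A naive alternating construction (interleave $S$ and $\bar S$ in increasing order) is not rigid enough: one easily builds examples where $P_S\cup P_T$ is a single $C_5$, showing no triangle is forced. So a more careful template is needed.

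\medskip

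\emph{Lower bound: construction and verification.} I would aim for a construction of the following shape. Fix a pivot vertex (say $1$) and a canonical ordering of $[n]$, and define $P_S$ so that near the pivot the path looks the same for \emph{every} $S$ on a prescribed small set $U$ of vertices, while the rest of the path zigzags through $S$ and $\bar S$ in a determined order. The hope is that for $S\neq T$ one can locate a vertex $v\in S\triangle T$ together with neighbours $u,w\in U$ of $v$ in $P_S\cup P_T$ for which the edge $uw$ is supplied by the common near-pivot structure, yielding the triangle $uvw$. The verification then splits into cases according to whether $v$ is the first vertex of $S\triangle T$, whether $v\in S\setminus T$ or $T\setminus S$, and what the symmetric-difference pattern looks like locally. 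Because the whole scheme needs to be symmetric enough that \emph{some} such $v$ exists for every pair $(S,T)$, one may need to iterate or randomize the template choice and deduce existence of a good family by a counting/pigeonhole argument.

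\medskip

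\emph{Main obstacle.} The upper bound and the bipartition bookkeeping are routine; the entire difficulty is in Step~3, namely designing a family of $\binom{n}{\lfloor n/2\rfloor}$ Hamiltonian paths with a given bipartition that is simultaneously uniform enough to guarantee a \emph{triangle} (not merely an odd cycle) in every pairwise union. The delicate point is that upgrading an ``odd cycle'' guarantee to a ``$C_3$'' guarantee does \emph{not} cost any factor in the count, so the construction has to be lossless. I expect this step to require a clever explicit template, and the case analysis verifying that $P_S\cup P_T$ always contains $C_3$ to be the main combinatorial work.
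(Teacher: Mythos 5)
This theorem is cited in the paper from Kov\'acs--Solt\'esz~\cite{triangle}; the present paper does not prove it, so there is no internal proof to compare against. Evaluating your proposal on its own merits:

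Your upper bound argument is correct and complete. A Hamiltonian path is a spanning tree, hence bipartite with a unique bipartition, and two spanning connected bipartite graphs on the same vertex set with the same bipartition have a bipartite union; conversely if the bipartitions differ one finds two vertices joined by an even $P$-path and an odd $P'$-path, whose concatenation is a closed odd walk and hence contains an odd cycle. Thus any pairwise odd-cycle-creating (in particular $C_3$-creating) family has all distinct bipartitions, and the count of unordered bipartitions with parts of sizes $\lfloor n/2\rfloor,\lceil n/2\rceil$ gives exactly the claimed bound, with the factor $\tfrac12$ appearing when $n$ is even because $S$ and $\bar S$ give the same bipartition. This matches the observation of \cite{komesi} and is routine.

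The lower bound, however, is not proved. What you write in the ``construction and verification'' step is an honest description of what a proof would need to accomplish, together with a guess at its shape (fix a pivot, make the path rigid near the pivot, find $v\in S\triangle T$ with two common neighbours), but you never exhibit a concrete family of $\binom{n}{\lfloor n/2\rfloor}$ paths nor verify the triangle property for any pair. You yourself flag this: ``I expect this step to require a clever explicit template,'' ``one may need to iterate or randomize.'' As you correctly note, the upgrade from ``odd cycle'' to ``$C_3$'' must be lossless, and that is precisely where all the difficulty and all the content of the Kov\'acs--Solt\'esz theorem lives. A plan that identifies the hard step and stops there is not a proof of the theorem. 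To complete it you would need to (i) write down an explicit map $S\mapsto P_S$ with bipartition $\{S,\bar S\}$, and (ii) prove, by an actual case analysis on $S\neq T$, that $P_S\cup P_T$ contains a triangle; neither is present.
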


The ideas of Theorem~\ref{thm:triangle} were generalized in \cite{oddcycle} where non-trivial lower bounds were obtained for $H_n(C_{2k+1})$. In particular, when $k$ is a power of two, then $H_n(C_{2k+1})=2^{n+o(n)}$. Although the ideas that led to Theorem~\ref{thm:triangle} have been useful in the case of longer odd cycles, they did not contribute to our understanding of $H_n(C_k)$ for even $k$. The authors of \cite{komesi} proved that the maximum number of Hamiltonian paths of $K_n$ each of whose pairwise unions contains an even cycle (of unspecified length) is $\Omega\left(\frac{n!}{n^2} \right)$. The natural question whether $H_n(C_4)$ behaves in the same way was answered negatively by Cohen, Fachini and Körner.

\begin{thm}[Cohen--Fachini--Körner~\cite{k=4}]\label{thm:oldk=4}
We have
\[\left\lfloor \frac{n}{2} \right\rfloor ! \leq H_n(C_4) \leq n^{\frac{3}{4}n+O\left(\frac{n}{\log{n}}\right)}.\]
\end{thm}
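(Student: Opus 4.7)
\textbf{Lower bound.} The plan is an explicit construction. Partition $[n]$ as $A\sqcup B$ with $|A|=\lfloor n/2\rfloor$, and fix an enumeration $b_1,\dots,b_{\lceil n/2\rceil}$ of $B$. To each permutation $\sigma$ of $A$ associate the interleaved Hamiltonian path
\[P_\sigma:\ b_1,\sigma(a_1),b_2,\sigma(a_2),b_3,\sigma(a_3),\dots,\]
yielding $\lfloor n/2\rfloor!$ distinct paths. For $\sigma\neq\tau$, two permutations of a common set differ in at least two positions, so one may pick an index $i$ with $\sigma(a_i)\neq\tau(a_i)$ at which both $b_i$ and $b_{i+1}$ appear in the sequence. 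Then $\sigma(a_i)$ is a common neighbour of $b_i$ and $b_{i+1}$ in $P_\sigma$, and $\tau(a_i)$ is a common neighbour of the same two vertices in $P_\tau$, so $b_i,\sigma(a_i),b_{i+1},\tau(a_i)$ span a $C_4$ in $P_\sigma\cup P_\tau$.

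\textbf{Upper bound.} The trivial count $|\mathcal F|\le n!/2$ gives exponent $n$ to leading order, so the target exponent $\tfrac34 n$ requires saving $\tfrac14 n\log n$ bits from the pairwise $C_4$-creating hypothesis. I would proceed via an encoding argument in three stages. Stage (i) is a local compatibility analysis: for a fixed $P_0\in\mathcal F$ and any other $P\in\mathcal F$, classify the $C_4$'s in $P\cup P_0$ by how their four edges split between $P_0$ and $P$, and extract the restrictions this imposes on the edges of $P$ incident to the four $C_4$-vertices. Stage (ii) translates those restrictions into savings in the natural permutation encoding of $P$. Stage (iii) iterates, choosing a collection of $\Theta(n/\log n)$ reference paths inside $\mathcal F$ so that the local savings accumulate across essentially disjoint blocks of $P$ of length roughly $\log n$.

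The main obstacle is stage (iii). A single reference only conveys $O(\log n)$ bits of information about $P$, far short of $\tfrac14 n\log n$, and a naive iteration double-counts because different references may constrain overlapping positions of $P$. I expect the resolution to be an averaging or entropy-compression argument guaranteeing that, after an appropriate (possibly adaptive) choice of references, each forced $C_4$ restricts an essentially disjoint block of $P$. The natural $\log n$ block size produced by such an argument is what should ultimately yield the $O(n/\log n)$ correction in the final exponent.
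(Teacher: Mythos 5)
Your lower-bound construction is correct and essentially reproduces the Cohen--Fachini--K\"orner construction that the paper itself restates at the start of Section~\ref{sec:lower}: fix the positions of about half the vertices and interleave a permutation of the remaining $\lfloor n/2\rfloor$ vertices into the gaps; any two distinct permutations disagree somewhere between two fixed vertices and hence create a $C_4$. The one place where care is needed is when $n$ is even, since then the last permuted slot is an endpoint of the path and not sandwiched between two fixed vertices; your observation that two distinct permutations differ in at least two positions takes care of this, so the count $\lfloor n/2\rfloor!$ stands.

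The upper bound, however, is not proved. You present a three-stage plan whose decisive step, stage~(iii), you yourself flag as the ``main obstacle,'' and you close it only with the statement that you ``expect the resolution to be an averaging or entropy-compression argument.'' No lemma is stated, the double-counting you correctly worry about is not controlled, and the adaptive choice of reference paths is left unspecified. The difficulty you identify --- a single reference conveys only $O(\log n)$ bits while the target requires saving on the order of $\tfrac14 n\log n$ bits --- is exactly the content of the theorem, so a plan that leaves it open is a genuine gap, not a proof. For orientation: the present paper does not reprove this bound (it cites it from \cite{k=4}), and the stronger upper bound it does prove in Theorem~\ref{thm:main} proceeds by an entirely different mechanism, namely a lower bound on $\overline{H_n(C_4)}$ obtained from the abundance of Hamiltonian paths in a $(p-1)$-regular $C_4$-free pseudorandom graph on $p^2$ vertices (Theorem~\ref{thm:kriv} together with Lemma~\ref{lem:goodgraph}), converted into an upper bound on $H_n(C_4)$ via the vertex-transitivity bound of Lemma~\ref{lem:vertextransitive}. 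To complete your proposal you would need either to carry out the entropy-compression argument in full detail or to reproduce the argument of \cite{k=4}.
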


Although Theorem~\ref{thm:oldk=4} clearly shows that the order of magnitude of $H_n(C_4)$ is much smaller than $\frac{n!}{n^2}$, it does not reveal the asymptotic growth rate of $H_n(C_4)$. While the second author tried to improve the bounds of Theorem~\ref{thm:oldk=4}, he managed to apply the method of Cohen, Fachini and Körner to longer even cycles.

\begin{thm}[Soltész~\cite{evencycle}]\label{thm:oldevenk}
For every integer $k>1$, we have
\[n^{\frac{1}{k}n-o(n)} \leq H_n(C_{2k}) \leq n^{\left( 1-\frac{2}{3k^2-2k} \right)n+o(n)}.\]
Moreover, for $k>5$ odd, we have the improved upper bound
\[H_n(C_{2k}) \leq n^{\left( 1-\frac{2}{3k^2-3k} \right)n+o(n)},\]
while for $k\in\{3,5\}$ we have the improved upper bound
\[H_n(C_{2k}) \leq n^{\left( 1-\frac{1}{k^2} \right)n+o(n)}.\]
\end{thm}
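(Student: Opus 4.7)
For the lower bound I would generalize the interleaving construction that underlies the known bound $\lfloor n/2\rfloor!\le H_n(C_4)$. Partition $[n]$ into $k$ equally-sized blocks $B_1,\ldots,B_k$, fix a global ``skeleton'' that forces each Hamiltonian path to visit the blocks in a prescribed cyclic order and that fixes the internal orderings of $B_2,\ldots,B_k$, and let the ordering of $B_1$ range over all $(n/k)!$ permutations. This immediately yields $n^{n/k-o(n)}$ Hamiltonian paths. The nontrivial verification is that any two such paths $P\ne P'$ have a union containing $C_{2k}$; this one produces by locating a suitable short cycle in the permutation comparing the two $B_1$-orderings and then running the fixed skeleton in reverse to close an alternating $2k$-cycle built from $P$- and $P'$-edges.

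\textbf{Upper bound.} For the upper bound I would adapt the encoding/entropy method of Cohen--Fachini--K\"orner. Let $\mathcal{F}$ be a family of pairwise $C_{2k}$-creating Hamiltonian paths. The strategy is to associate to each $P\in\mathcal{F}$ a compressed record $\phi(P)$ obtained by deleting from $P$ the edges incident to a well-chosen subset $S\subseteq[n]$ of size $|S|\approx n/(3k^2-2k)$, while keeping only the resulting forest together with some bookkeeping on how the deleted edges reattach through $S$. A direct count shows that the number of possible records is at most $n^{(1-2/(3k^2-2k))n+o(n)}$, so the bound follows once we prove that $\phi$ is injective on $\mathcal{F}$.

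Injectivity is the main obstacle. If $P_1\ne P_2$ in $\mathcal{F}$ satisfied $\phi(P_1)=\phi(P_2)$, then every edge of the symmetric difference $P_1\triangle P_2$ would be incident to $S$, while the pairwise hypothesis provides a copy of $C_{2k}$ in $P_1\cup P_2$. The plan is to decompose the $2k$ edges of this cycle into maximal arcs alternately belonging to $P_1$ and to $P_2$, say $2j$ arcs in total; the $2j$ transition vertices must all lie in $S$, and a double count of transition vertices against arc lengths and $|S|$ produces a contradiction. The worst-case decomposition, the one maximising $j$, is exactly what pins down the exponent $3k^2-2k$.

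\textbf{Improvements.} The refined bounds come from a sharper analysis of the arc decompositions. For odd $k>5$, a parity argument on the $P_1$- and $P_2$-arcs rules out the worst-case configuration and replaces $3k^2-2k$ by $3k^2-3k$. For the small cases $k\in\{3,5\}$ the list of possible decompositions is short enough to enumerate; handling each by an ad hoc argument yields the sharper exponent $1-1/k^2$.
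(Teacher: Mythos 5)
This statement is Theorem~\ref{thm:oldevenk}, which the present paper \emph{cites} from the second author's earlier work \cite{evencycle} and does not reprove; it is only stated as background before being improved in Theorem~\ref{thm:longevencycle}. There is therefore no in-paper proof to compare your sketch against, so I will evaluate it on its own merits.

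Your lower bound construction does not produce pairwise $C_{2k}$-creating paths for $k>2$. Take the interleaved reading of your ``cyclic order'': $B_1$ sits at positions $1,k+1,2k+1,\dots$, the other blocks fill the remaining positions with fixed occupants $x_i$, and you permute $B_1$. Two such paths can differ by a single transposition of the elements $a,b$ placed at positions $1$ and $k+1$, agreeing everywhere else. The only non-common edges in the union are then $\{a,x_2\},\{x_k,b\},\{b,x_{k+2}\}$ from one path and $\{b,x_2\},\{x_k,a\},\{a,x_{k+2}\}$ from the other, so every cycle in the union lives on $\{a,b,x_2,\dots,x_k,x_{k+2}\}$, and the longest one has length $k+2$; there is no $C_{2k}$. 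If instead $B_1$ is a contiguous block, an adjacent transposition inside $B_1$ produces only a $C_3$ and a $C_4$. Either way, the claim that ``any two such paths $P\ne P'$ have a union containing $C_{2k}$'' is false, and the construction needs a genuinely different mechanism that forces a difference between two paths to close into a cycle of length exactly $2k$; the step you gloss over (``locating a suitable short cycle \dots and then running the fixed skeleton in reverse'') is precisely where the construction breaks.

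The upper bound sketch is the right flavour (an injective encoding plus a count of possible encodings), but it is not a proof. You assert that the number of records is $n^{(1-2/(3k^2-2k))n+o(n)}$, yet the description of the record (forget the $\le 2|S|$ edges meeting $S$, with $|S|\approx n/(3k^2-2k)$, plus unspecified ``bookkeeping'') does not obviously give that count: the number of preimages of a fixed record under your $\phi$ is only $\approx n^{|S|}$, so without a more compressed bookkeeping scheme the record count stays above the target bound. More importantly, the entire content of the theorem --- the specific exponents $3k^2-2k$, $3k^2-3k$, and $1-1/k^2$ --- is supposed to come out of the ``double count of transition vertices against arc lengths'' and the case analysis of arc decompositions of a $C_{2k}$, and none of that is carried out. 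As written, the argument has the shape of the proof but not the arithmetic that determines the exponents, so it cannot be checked.
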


In this paper we have finally succeeded to improve the bounds of Theorem~\ref{thm:oldk=4}.

\begin{thm}\label{thm:main}
We have
\[n^{\frac{1}{2}n-\frac{1}{2}\frac{n}{\log{n}}-O(1)}\leq H_n(C_4) \leq n^{\frac{1}{2}n+o\left(\frac{n}{\log{n}} \right)}.\]
\end{thm}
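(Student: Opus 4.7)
The plan is to prove the lower and upper bounds separately.

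\emph{Lower bound.} By Stirling $(n-1)!!=n^{\frac{1}{2}n-\frac{1}{2}\frac{n}{\log n}-O(1)}$, matching the claimed bound for even $n$, so I would index the family by perfect matchings of $[n]$. For each matching $M$, I would list its pairs in the canonical order $\{a_1,b_1\},\dots,\{a_{n/2},b_{n/2}\}$, where $a_i$ is the smallest element not used in earlier pairs and $a_i<b_i$, and set
\[
P_M := a_1\,b_1\,a_2\,b_2\cdots a_{n/2}\,b_{n/2}.
\]
To verify pairwise $C_4$-creation, I would take $M\neq M'$ and let $i$ be the smallest index where the canonical lists differ; then $a_i^M=a_i^{M'}=a$ while $b:=b_i^M\neq b_i^{M'}=:b'$. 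A short case analysis on the successors of $b$ in $P_M$ and of $b'$ in $P_{M'}$ then locates a $C_4$ whose four vertices appear among positions $2i-1,\dots,2i+2$ of either path, using the diverging edges at the $i$-th matching pair. The odd-$n$ case is handled analogously with near-perfect matchings. Krivelevich's theorem can be invoked on a pseudorandom graph to slightly robustify this construction if constant-order improvements in the exponent are sought.

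\emph{Upper bound.} Given a family $\mathcal{F}$ of pairwise $C_4$-creating Hamiltonian paths, I would encode each $P\in\mathcal{F}$ as a Hamiltonian path of a $C_4$-free auxiliary graph $H\subseteq K_n$ plus a correction of $o(n/\log n)$ edges. Since a $C_4$-free graph on $n$ vertices has at most $O(n^{3/2})$ edges by K\H{o}v\'ari--S\'os--Tur\'an, hence average degree $O(\sqrt{n})$, the crude max-degree bound $n\cdot\Delta^{n-1}$ gives at most $n^{n/2+O(1)}$ Hamiltonian paths in $H$; Krivelevich's sharper count refines this to $n^{n/2+o(n/\log n)}$ for pseudorandom $H$. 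Combined with the $n^{o(n/\log n)}$ choices for corrections, this gives $|\mathcal{F}|\leq n^{n/2+o(n/\log n)}$. The pivotal step is to design the encoding so that the pairwise $C_4$-creation hypothesis forces it to be essentially injective on $\mathcal{F}$, since otherwise two paths encoded by the same $H$-path and nearby corrections would have a $C_4$-free union, contradicting membership in $\mathcal{F}$.

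\emph{Main obstacle.} The hard part will be the upper bound: defining the encoding carefully so that (i) it maps each path into the Hamiltonian paths of a $C_4$-free pseudorandom graph and (ii) the pairwise $C_4$-creation forces essential injectivity, both with an error of only $o(n/\log n)$ rather than the classical $o(n)$ one obtains from a coarse counting. It is precisely at this point that Krivelevich's sharp Hamiltonian path count for expanders, rather than a naive max-degree estimate, becomes indispensable. The lower bound is by contrast combinatorially direct, once the matching-indexed construction is chosen canonically.
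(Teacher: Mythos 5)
Your proposal has a fatal error in the lower bound and leaves the upper bound essentially open, and neither matches the paper's actual route.

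\emph{Lower bound.} The matching-indexed construction is not pairwise $C_4$-creating. Take $n=6$, $M=\{\{1,2\},\{3,5\},\{4,6\}\}$ and $M'=\{\{1,3\},\{2,6\},\{4,5\}\}$. The canonical orderings give $P_M=(1,2,3,5,4,6)$ and $P_{M'}=(1,3,2,6,4,5)$, whose union has edge set $\{12,13,23,26,35,45,46\}$; this graph contains a triangle on $\{1,2,3\}$ and a $C_5$ on $(2,6,4,5,3)$, but no $C_4$ at all, as one checks by inspecting all $\binom{6}{4}$ quadruples. The flaw is structural: after the paths diverge at the $i$-th free entry $b_i$, the \emph{next} forced entry $a_{i+1}$ depends on $b_i$, so $b_i$ is not sandwiched between two vertices common to both paths, and the Cohen--Fachini--K\"orner $K_{2,2}$ argument does not apply. (Here $b=2$, $b'=3$, $a_2^M=3$, $a_2^{M'}=2$, $b_2^M=5\neq 6=b_2^{M'}$ — exactly the case your ``short case analysis'' skips.) The paper's Lemma~\ref{claim:newlower} avoids this by a more careful recursive placement procedure in which the freely chosen vertex is always inserted between two positions whose occupants are already determined by the previous (identical) steps; this yields $\prod_{i\ge 1}(n-2i)$ pairwise $C_4$-creating paths, which is still $(n-1)!!$ in order of magnitude, so the quantitative target you computed with Stirling is correct but the construction that achieves it is different. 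Also, Krivelevich's theorem plays no role in the paper's lower bound; it is used only on the upper-bound side.

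\emph{Upper bound.} Your proposed encoding of each path in $\mathcal F$ as a Hamiltonian path of a $C_4$-free auxiliary graph plus $o(n/\log n)$ correction edges is not the paper's argument, and you yourself flag it as the ``main obstacle'' without supplying the key step (why such an encoding exists, why it is essentially injective, and why the number of auxiliary graphs $H$ one might need does not swamp the count). The paper instead proves the clean inequality $H_n(C_4)\overline{H_n(C_4)}\le n!/2$ (Lemma~\ref{lem:vertextransitive}), via vertex-transitivity of the conflict graph on Hamiltonian paths and $\alpha\cdot\omega\le\alpha\cdot\omega^\ast=|V|$, and then lower-bounds $\overline{H_n(C_4)}$ by applying Theorem~\ref{thm:kriv} to a $(p-1)$-regular $C_4$-free graph on $p^2$ vertices with second eigenvalue $O(\sqrt p)$ (Lemma~\ref{lem:goodgraph}), together with a prime-gap theorem to handle general $n$. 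So Krivelevich does enter, but as a lower bound on the complementary quantity $\overline{H_n(C_4)}$ — not as an upper bound on Hamiltonian paths of pseudorandom graphs inside a bespoke encoding. Your sketch does not contain this vertex-transitivity reduction and, as written, does not close the gap you identify.
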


Theorem~\ref{thm:main} closes the superexponential gap of Theorem~\ref{thm:oldk=4}, and determines the leading term in the asymptotics of $\log(H_n(C_k))$ for the smallest unsolved cycle, $C_4$. The improvement on the lower bound is only a modest contribution, the main result is the new upper bound. The main idea in the proof of the upper bound is the observation that the large spectral gap of some dense, regular and $C_4$-free graphs combined with the following theorem of Krivelevich guarantees the existence of $n^{\frac{1}{2}n-o(n)}$ pairwise non-$C_4$-creating Hamiltonian paths. This gives a lower bound on $\overline{H_n(C_4)}$, which can be used readily to obtain an upper bound for $H_n(C_4)$, see Lemma~\ref{lem:vertextransitive}.

\begin{thm}[Krivelevich~\cite{krivelevich}]\label{thm:kriv}
Let $G$ be a $d$-regular graph on $n$ vertices whose non-trivial eigenvalues have absolute value at most $\lambda$. Assume that the following two conditions hold:
\begin{itemize}
\item there is an $\eps>0$ such that $\frac{d}{\lambda} \geq (\log{n})^{1+\eps}$;
\item $ \lim\limits_{n \to \infty} \frac{\log{d}\log{\frac{d}{\lambda}}} {\log{n}}=\infty$.
\end{itemize}
Then the number of Hamiltonian cycles in $G$ is $n!\left(\frac{d}{n}\right)^n(1+o(1))^n$.
\end{thm}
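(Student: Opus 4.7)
The theorem gives matching asymptotic upper and lower bounds on the number of Hamilton cycles in a pseudo-random $d$-regular graph. The plan is to prove the two directions by very different means: the upper bound is a routine application of Br\'egman's permanent inequality, while the lower bound is the substantive part and will require the spectral hypotheses in a serious way.

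For the upper bound, let $A$ be the adjacency matrix of $G$. Every Hamilton cycle corresponds to exactly two $n$-cycle permutations $\pi\in S_n$ with $A_{v,\pi(v)}=1$ for all $v$, so the number of Hamilton cycles is at most $\mathrm{per}(A)/2$. Br\'egman's theorem applied to the $d$-regular $0/1$ matrix $A$ gives $\mathrm{per}(A)\leq (d!)^{n/d}$, and Stirling yields $(d!)^{1/d}=(d/e)(1+o_d(1))$. The spectral hypotheses ensure $d\to\infty$, so $(d!)^{n/d}=(d/e)^n(1+o(1))^n$; applying Stirling to $n!$ gives $n!(d/n)^n=(d/e)^n(1+o(1))^n$ as well, and the two expressions agree.

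For the lower bound, the plan is to run a rotation-extension argument of P\'osa type, using the spectral expansion to guarantee that the process does not get stuck and produces Hamilton cycles in near-extremal abundance. The Expander Mixing Lemma translates the eigenvalue hypothesis into the quantitative statement that for any $S,T\subset V$ of not-too-small size, the number of edges between them is $\frac{d}{n}|S||T|+O(\lambda\sqrt{|S||T|})$; since the first hypothesis gives $d/\lambda\geq(\log n)^{1+\eps}$, this error is absorbed in every regime we care about. The counting is then carried out by building Hamilton cycles step by step with a careful accounting: at each of the $n$ steps, expansion leaves roughly $d$ choices for the next vertex, the product telescopes to about $d^n$, and after dividing by the appropriate symmetry factor one obtains a count that matches Br\'egman's upper bound up to $(1+o(1))^n$.

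The main obstacle is precisely this sharpness of the lower bound. Producing even a single Hamilton cycle in a pseudo-random graph is classical and requires much weaker hypotheses, but matching Br\'egman up to a factor of the form $(1+o(1))^n$ is delicate because every union bound, concentration estimate, or approximation used in the counting can only afford sub-exponential slack. The hypothesis $\log d\cdot\log(d/\lambda)/\log n\to\infty$ is calibrated precisely to control the cumulative multiplicative loss across the $n$ construction steps so that the final error remains $(1+o(1))^n$ and not genuinely exponential.
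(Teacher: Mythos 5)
This theorem is quoted from Krivelevich's paper and is used in the present paper purely as a black box; the paper contains no proof of it, so there is no ``paper's own proof'' to compare your attempt against. What follows is therefore an assessment of the attempt on its own terms.

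Your upper bound is correct and is indeed the routine half. Each (undirected) Hamilton cycle corresponds to exactly two permutations $\pi$ with $A_{v,\pi(v)}=1$ for all $v$, so the count is at most $\tfrac12\mathrm{per}(A)\le\tfrac12(d!)^{n/d}$ by Br\'egman. The hypotheses force $d\to\infty$ (for $d$ bounded, Alon--Boppana gives $\lambda\ge 2\sqrt{d-1}-o(1)$, which kills the second hypothesis), and then Stirling identifies $(d!)^{n/d}$ and $n!(d/n)^n$ up to $(1+o(1))^n$. The lower-bound outline is also broadly aligned with Krivelevich's actual strategy --- a P\'osa rotation--extension construction with a step-by-step multiplicative count, expander mixing supplying the expansion, and the second hypothesis tuned so that the total loss stays sub-exponential. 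However, the quantitative heuristic as written is off in a way that matters: if there were genuinely $\sim d$ choices at each of the $n$ steps, the count would be $\sim d^n$, which exceeds $n!(d/n)^n\approx(d/e)^n$ by a factor of $e^n$, and there is no ``symmetry factor'' of that size available (dividing by $2$ or $2n$ does not help). The correct picture is that after $k$ vertices have been placed, about a $k/n$ fraction of the current endpoint's $d$ neighbours are already used, leaving $\approx d(1-k/n)$ fresh choices; the product $\prod_{k=0}^{n-1}d(1-k/n)=d^n\,n!/n^n$ is precisely the claimed quantity. The substance of Krivelevich's proof, which the sketch does not engage with, is exactly in making the $d(1-k/n)(1+o(1))$ estimate hold at essentially every step, and in amortising the cost of the steps where the greedy extension stalls (where rotations are performed) so that the cumulative error stays $(1+o(1))^n$. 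As submitted, the lower bound is a statement of intent with a miscalibrated per-step count rather than a proof.
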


The same ideas applied to $C_{2k}$-free graphs instead of $C_4$-free graphs yield the following improvement on Theorem~\ref{thm:oldevenk}.

\begin{thm}\label{thm:longevencycle}
For every integer $k>2$, we have
\[n^{\frac{1}{k}n-o(n)}\leq H_n(C_{2k})\leq n^{\left(1-\frac{1}{3k} \right)n+o(n)}.\]
\end{thm}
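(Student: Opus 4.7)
The lower bound $H_n(C_{2k})\geq n^{n/k-o(n)}$ is identical to the one in Theorem~\ref{thm:oldevenk}, so I would simply cite it; the new content is the upper bound, and my plan is the one signalled by the authors just before Theorem~\ref{thm:main}. The idea is to construct a dense, regular, $C_{2k}$-free graph $G$ with a strong spectral gap, feed it into Krivelevich's theorem (Theorem~\ref{thm:kriv}) to manufacture many pairwise non-$C_{2k}$-creating Hamiltonian paths of $K_n$, and then invert this via Lemma~\ref{lem:vertextransitive} to bound $H_n(C_{2k})$ from above.

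Concretely, I would take $G$ to be a Lubotzky--Phillips--Sarnak Ramanujan graph $X^{p,q}$ on $n' = q(q^2-1)/2$ vertices, with primes $p,q$ chosen so that $n-n' = o(n)$ and $d := p+1$ is as large as possible subject to girth greater than $2k$. Since $\text{girth}(X^{p,q})\geq 2\log_p q \geq (2/3)\log_p n'$, the choice $p = (n')^{1/(3k)-o(1)}$ makes $G$ both $C_{2k}$-free and of degree $d = (n')^{1/(3k)-o(1)}$, while the Ramanujan bound $\lambda\leq 2\sqrt{d-1}$ makes both hypotheses of Theorem~\ref{thm:kriv} routine: $d/\lambda$ is polynomial in $n$, and $\log d \cdot \log(d/\lambda)/\log n$ grows like $(\log n)/(18k^2)\to\infty$. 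Theorem~\ref{thm:kriv} then produces $n'!(d/n')^{n'}(1+o(1))^{n'} = n^{n/(3k)+o(n)}$ Hamiltonian cycles in $G$, hence (after removing one edge from each) at least that many Hamiltonian paths lying entirely in the $C_{2k}$-free graph $G$; any two of them are therefore pairwise non-$C_{2k}$-creating inside $G$ itself.

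To promote these to Hamiltonian paths of $K_n$ without creating any $C_{2k}$ in pairwise unions, I would fix a vertex $v\in V(G)$, label the missing vertices of $K_n$ as $u_1,\ldots,u_{n-n'}$, and append the common tail $T = v\,u_1\,u_2\cdots u_{n-n'}$ to every Hamiltonian path of $G$ having $v$ as an endpoint. Vertex-transitivity of $X^{p,q}$ ensures that a $\Theta(1/n')$-fraction of the cycle-derived Hamiltonian paths meet this restriction, so still $n^{n/(3k)+o(n)}$ paths survive. Given two extended paths $P_1\cup T$ and $P_2\cup T$, their union equals $(P_1\cup P_2)\cup T$; since $T$ is a tree attached to $G$ only at $v$ and its other vertices have total degree at most $2$ in the union, with the terminal $u_{n-n'}$ having degree $1$, no $T$-edge can participate in any cycle of the union. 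Hence every cycle in the union lies in $P_1\cup P_2\subseteq G$ and so has length greater than $2k$. This gives $\overline{H_n(C_{2k})}\geq n^{n/(3k)+o(n)}$, and Lemma~\ref{lem:vertextransitive} (which asserts $H_n(C_{2k})\cdot\overline{H_n(C_{2k})}\leq n!/2 = n^{n+o(n)}$) then yields the desired $H_n(C_{2k})\leq n^{(1-1/(3k))n+o(n)}$.

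The main technical obstacle I anticipate is purely arithmetic: LPS parameters $(p,q)$ exist only under certain Legendre symbol and congruence conditions, so one must appeal to a prime-distribution input (Bertrand- or Chebotarev-style) to guarantee that $p$ and $q$ can be chosen with $d = (n')^{1/(3k)-o(1)}$ and $n - n' = o(n)$ simultaneously; the tail-extension above is what lets this slack be absorbed harmlessly into the $o(n)$ error. A safety net, should the LPS window ever prove inconvenient, is to replace $X^{p,q}$ by a random $d$-regular graph on exactly $n$ vertices, delete one edge from each of the (rare) cycles of length $\leq 2k$, and appeal to Friedman's theorem for the spectral gap.
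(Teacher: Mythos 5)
Your overall strategy matches the paper's: LPS Ramanujan graphs, Krivelevich's theorem (Theorem~\ref{thm:kriv}), and Lemma~\ref{lem:vertextransitive}, with the correct parameter regime $p\approx n^{1/(3k)}$, $q\approx n^{1/3}$, and the correct final exponent $1-\tfrac{1}{3k}$. The one structural difference is in how you handle the fact that LPS graphs exist only on the special vertex counts $q(q^2-1)/2$: you extend Hamiltonian paths of $G$ to $K_n$ by a pendant tail attached at a fixed endpoint (which is sound, since the tail is a bridge-path and contributes nothing to any cycle of a pairwise union), whereas the paper more simply invokes the monotonicity $H_n(C_{2k})\leq H_m(C_{2k})$ with $m$ an LPS size slightly \emph{above} $n$, applies Lemma~\ref{lem:vertextransitive} to $K_m$, and absorbs $m/n=1+o(1)$ into the error. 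Both routes work; the paper's avoids the endpoint-counting and tail-attachment steps. Also note that your ``safety net'' via random $d$-regular graphs and Friedman's theorem would need a patch: deleting an edge from each short cycle destroys the regularity that Theorem~\ref{thm:kriv} requires.

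The genuine gap is the arithmetic step you flag but misjudge. You need primes $p,q\equiv 1\pmod 4$ with $q\asymp p^k$, $q(q^2-1)/2$ near $n$, \emph{and} $p$ a quadratic residue modulo $q$, and you suggest ``Bertrand- or Chebotarev-style'' inputs suffice. They do not: the quadratic-residue condition couples $p$ and $q$, and for fixed $q$ the admissible $p$ lie in an interval of length $\asymp q^{1/k}$, far too short relative to the modulus $q$ for any prime-in-AP or Chebotarev theorem when $k\geq 2$. Flipping roles by quadratic reciprocity $\bigl(\tfrac{p}{q}\bigr)=\bigl(\tfrac{q}{p}\bigr)$ gives an interval of length $\asymp p^k$ but still requires hitting $\sim p/2$ prescribed residue classes modulo $4p$, which is outside the Siegel--Walfisz range and not what Bombieri--Vinogradov gives for a single modulus. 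The paper's resolution (Corollary~\ref{cor:compl_numb_theor}) is the number-theoretic heart of the proof: assume no $(\eps,k)$-good prime exists, form the double mean square $\sum_q\bigl|\sum_p\bigl(\tfrac{p}{q}\bigr)\bigr|^2$ over the relevant ranges, bound it below by $\gg x^{1+2/k}/(\log x)^3$ (since every symbol would equal $-1$), and bound it above by $\ll x^{1+3/(2k)}+x^{1/2+2/k}$ via Heath-Brown's large-sieve mean value estimate for real character sums (Theorem~\ref{thm:heathbrown}) --- a contradiction. Without this (or an equivalent input), your argument does not close, so you should treat Heath-Brown's theorem as an essential cited ingredient rather than a routine density fact.
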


Theorem~\ref{thm:longevencycle} improves on the bounds of Theorem~\ref{thm:oldevenk} except in the case when $k=3$. The authors believe that the proof of Theorem~\ref{thm:main} can be adapted to the case of $C_6$ to give an upper bound of $n^{\frac{2}{3}n+o(n)}$.

The paper is organized as follows. Section~\ref{sec:lower} contains constructions for the lower bound part of Theorem~\ref{thm:main}, while Section~\ref{proof} treats the corresponding upper bound, with some lengthy but straightforward arguments postponed to Appendix~\ref{app:combinatorics}. We prove Theorem~\ref{thm:longevencycle} in Section~\ref{sec:longer}. We conclude with the connection of $H_n(C_4)$-like problems and the so-called reversing permutation conjecture (for which we present an improved lower bound) in Section~\ref{sec:connection}, along with some closing remarks.

\begin{acknowledgement} We are grateful to the referees for their careful reading and valuable comments. We also thank Zoltán Füredi and Miklós Simonovits for useful discussions.
\end{acknowledgement}

\section{The lower bound of Theorem~\ref{thm:main}}\label{sec:lower}

The original construction of Cohen, Fachini and Körner for the lower bound for $H_n(C_4)$ is as follows. For the sake of simplicity, let $n-1$ be even, and let $\pi$ be a permutation of the set $\{2,4,\dotsc,n-1\}$. Observe that the Hamiltonian paths of $K_n$ of the form \[(1,\pi(2),3,\pi(4),5,\dotsc,n-2,\pi(n-1),n),\]
with $\pi$ as above, are pairwise $C_4$-creating. Indeed, if $\pi_1$ and $\pi_2$ are different permutations as above, say $\pi_1(2i) \neq \pi_2(2i)$, then the set of vertices $\{2i-1,\pi_1(2i),2i+1,\pi_2(2i)\}$ induces a $C_4$ in the union of the two paths. The improvement uses the same idea but in a recursive manner.

\begin{lem}\label{claim:newlower}
We have
\[\prod_{i=1}^{\lfloor (n-1)/2 \rfloor}(n-2i)\leq H_n(C_4).\]
\end{lem}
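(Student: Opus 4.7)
The plan is to prove the lemma by induction on $n$, establishing the multiplicative step
\[
H_n(C_4)\ \ge\ (n-2)\,H_{n-2}(C_4)\qquad (n\ge 3),
\]
from which the stated product follows telescopically with the trivial base $H_n(C_4)\ge 1$ for small $n$ (the empty product equals $1$). The name ``recursive'' in the introduction strongly hints that the improvement is obtained in exactly this fashion: at each level of recursion we pick up a factor of $n-2$ rather than the CFK factor of $(n-1)/2$.

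For the inductive step, fix a family $\mathcal{F}_{n-2}$ of pairwise $C_4$-creating Hamiltonian paths of $K_{n-2}$ on vertex set $[n-2]$, of size $H_{n-2}(C_4)$. We construct $\mathcal{F}_n$ on $[n]$ of size $(n-2)\,|\mathcal{F}_{n-2}|$ by producing, for each $P=(p_1,\dots,p_{n-2})\in \mathcal{F}_{n-2}$ and each ``marker'' $u\in[n-2]$, an extension $P_u$ that uses the two new vertices $n-1,n$. A natural candidate, directly generalising the CFK pattern by letting $u$ play the role of a movable reference while $n-1,n$ play the role of the fixed endpoints, is to replace $u$ in $P$ by $n-1$ and then append $u,n$ at the tail: if $u=p_i$, put
\[
P_u\ =\ (p_1,\dots,p_{i-1},\,n-1,\,p_{i+1},\dots,p_{n-2},\,u,\,n).
\]
This yields $(n-2)\,|\mathcal{F}_{n-2}|$ Hamiltonian paths of $K_n$, matching the desired count.

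To verify the pairwise $C_4$-creating property we split into two cases. First, if $P_u$ and $P_v$ share the same base $P$ but use distinct markers $u\ne v$, the appended edges $(p_{n-2},u),(u,n)$ of $P_u$ together with $(p_{n-2},v),(v,n)$ of $P_v$ realise a $C_4$ on $\{u,v,p_{n-2},n\}$; the boundary configurations in which $u$ or $v$ coincides with $p_{n-2}$ are then repaired through the inserted edges incident to $n-1$. Second, if $P_u,Q_v$ come from different bases $P\ne Q$, the inductive hypothesis provides a $C_4$ in $P\cup Q$, and since our insertion only removes edges of $P$ incident to $u$ (and of $Q$ incident to $v$), any such $C_4$ disjoint from those few deleted edges lifts to $P_u\cup Q_v$.

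The hard part, and the main obstacle to this approach, is the worst subcase of Case II, when every $C_4$ of $P\cup Q$ happens to be supported on the handful of edges killed by the insertion. In that situation one must produce a replacement $C_4$ inside $P_u\cup Q_v$ using the newly added vertices $n-1$ and $n$, exploiting that in the union these vertices acquire several neighbours coming from the original adjacencies of $u$ in $P$ and $v$ in $Q$. Pinning this down will likely force either a slightly refined insertion rule (so that the surviving fragments of $P$ and $Q$ around the marker always contain a short cycle), or a careful selection of which $u$ to assign to each base path $P$, so that the ``lost'' inductive $C_4$ is systematically compensated by a $C_4$ built from the inserted edges at $n-1$ and $n$.
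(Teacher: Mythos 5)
Your proposal has a genuine gap, and to your credit you identify it yourself in the final paragraph: Case II, where every $C_4$ in $P\cup Q$ is supported on edges incident to the chosen markers $u$ and $v$. This subcase can really occur. For instance, if $P$ passes through $\dots,a,u,b,\dots$ and $Q$ through $\dots,a,v,b,\dots$, the pair $(P,Q)$ may well be $C_4$-creating only via the cycle $a-u-b-v-a$, and after replacing $u$ by $n-1$ in $P$ and $v$ by $n-1$ in $Q$ that cycle disappears entirely (both modified paths now contain the segment $a,n-1,b$). A replacement $C_4$ through $n-1$ or $n$ would need, e.g., $P$ and $Q$ to end at the same vertex, which is not guaranteed for an arbitrary extremal family of $K_{n-2}$. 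So the recursion $H_n(C_4)\ge(n-2)H_{n-2}(C_4)$ is not established by your construction, and I do not see how to patch it with "a careful selection of $u$" without essentially controlling the whole family $\mathcal{F}_{n-2}$ --- which is what the paper does instead.

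The paper does not induct on an arbitrary extremal family; it builds one concrete family of size $\prod_{i\ge 1}(n-2i)$ directly, by filling in a path template $(1,*,2,*,*,\dots,*)$ one vertex at a time. The steps alternate between a "free" step (pick any remaining vertex for the leftmost star, which at that moment always sits between two already-placed vertices) and a "forced" step (place the smallest remaining vertex next to the leftmost star when the stars form a block). Because every path in the family is determined by the same ordered sequence of decisions, two distinct paths $H_1,H_2$ first differ at a free step, and at that step they insert two different vertices $x\ne y$ between the same two already-placed vertices $a,b$; the four vertices $\{a,b,x,y\}$ then span a $C_4$ in $H_1\cup H_2$. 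This "first-divergence" argument is exactly what is missing from your proposal: instead of inheriting an arbitrary $C_4$ from an unknown family and hoping it survives, the paper engineers the family so that a specific, easily located $C_4$ is always present. Also note that your Case I is only checked for $u,v\ne p_{n-2}$; the boundary configurations are asserted but not verified, and some of them require nontrivial new cycles through $n-1$.
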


\begin{proof}
We build a set of Hamiltonian paths recursively, by starting from $(*,*,\dotsc,*)$ and placing the yet unused elements of $[n]$ one-by-one somewhere on the path. Initially, let the path be $(1,*,2,*,*,\dotsc,*)$, and let $S_2:=[n]\setminus \{1,2\}$ be the set of remaining elements that we can still place somewhere. Assuming that we have already placed $i$ elements on the path, we perform one of two steps:
\begin{enumerate}
\item If the stars of the currently constructed Hamiltonian path do not form a consecutive block or there is a single star left: we pick any element from $S_i$ and replace the leftmost star with it. Let $S_{i+1}$ be $S_i$ minus the picked element.
\item If the stars of the currently constructed Hamiltonian path do form a consecutive block of size at least 2: we pick the smallest element of $S_i$ and replace the star that is next to the leftmost star with it. Let $S_{i+1}$ be $S_i$ minus the picked element.
\end{enumerate}
It is easy to see that the number of Hamiltonian paths that can be constructed this way is the claimed amount. The fact that two such Hamiltonian paths $H_1$ and $H_2$ are $C_4$-creating can be seen by considering the first step of the procedure at which the two paths become different. Such a step must be of type $(1)$, and hence the picked element is placed between two other already picked elements: $a,b$ which are the same two elements since up to this point $H_1$ was equal to $H_2$. But as in the original construction, these four elements form a $C_4$ in the union $H_1\cup H_2$.
\end{proof}

Using Stirling's formula, it is straightforward to show that Lemma~\ref{claim:newlower} implies the lower bound of Theorem~\ref{thm:main}.

\section{The upper bound of Theorem~\ref{thm:main}}\label{proof}

For many problems where the maximum number of pairwise compatible objects is the question, one can prove the equivalent of the following lemma.

\begin{lem}\label{lem:vertextransitive}
For every graph $G$ and every integer $n>1$, we have
\[H_n(G)\overline{H_n(G)} \leq n!/2.\]
\end{lem}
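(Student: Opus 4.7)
The plan is to interpret $H_n(G)$ and $\overline{H_n(G)}$ as the clique number and the independence number of an auxiliary graph on Hamiltonian paths, and then exploit the obvious symmetry of this graph.

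First, I would form the graph $\mathcal{H}$ whose vertex set is the set of (undirected) Hamiltonian paths of $K_n$, with two paths joined by an edge iff they are $G$-creating. Since a Hamiltonian path corresponds to a permutation of $[n]$ up to reversal, we have $|V(\mathcal{H})|=n!/2$. By definition $H_n(G)=\omega(\mathcal{H})$ (the clique number) and $\overline{H_n(G)}=\alpha(\mathcal{H})$ (the independence number).

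Next, I would observe that $\mathcal{H}$ is vertex-transitive. The symmetric group $S_n$ acts on the vertex set of $K_n$, and this induces an action on Hamiltonian paths. The induced action is clearly transitive (any Hamiltonian path can be mapped to any other by a suitable relabelling of the vertices), and the property of being $G$-creating is preserved by relabelling, so every element of $S_n$ induces an automorphism of $\mathcal{H}$.

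Finally, the statement reduces to the classical fact that for any vertex-transitive graph $\Gamma$ one has $\omega(\Gamma)\cdot\alpha(\Gamma)\leq|V(\Gamma)|$. I would invoke this directly, or prove it in one line by a double-counting / orbit averaging argument: letting $\mathrm{Aut}(\Gamma)$ act transitively on $V(\Gamma)$, for a maximum clique $K$ and a maximum independent set $I$ we count pairs $(g,v)$ with $v\in K\cap g(I)$; on the one hand this number equals $|\mathrm{Aut}(\Gamma)|\cdot|K|\cdot|I|/|V(\Gamma)|$ by transitivity, and on the other hand $|K\cap g(I)|\leq 1$ for every $g$ since a clique meets an independent set in at most one vertex, giving the inequality. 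Applied to $\mathcal{H}$, this yields $H_n(G)\cdot\overline{H_n(G)}\leq n!/2$. No genuine obstacle is anticipated; the only minor thing to be careful about is the factor $1/2$ arising from treating Hamiltonian paths as undirected, which is already built into $|V(\mathcal{H})|$.
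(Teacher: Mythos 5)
Your proof is correct and follows essentially the same route as the paper: build the auxiliary graph on Hamiltonian paths (you use the $G$-creating graph, the paper its complement, which is immaterial since $\alpha$ and $\omega$ simply swap roles), observe vertex-transitivity via the $S_n$-action on $K_n$, and invoke the bound $\alpha\cdot\omega\le|V|$ for vertex-transitive graphs. The only cosmetic difference is that you supply a self-contained orbit-averaging proof of that bound, whereas the paper cites it from Godsil--Royle via the fractional clique number.
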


\begin{proof}
Consider the graph $G'$ whose vertices are the Hamiltonian paths of $K_n$, and two vertices are connected if the corresponding Hamiltonian paths are not $G$-creating. This graph is vertex-transitive since relabeling the vertices of $K_n$ is an automorphism acting transitively on the Hamiltonian paths of $K_n$. Therefore, a well-known result \cite[Lemma~7.2.2]{godsil} shows that
\[\alpha(G')\omega(G')\leq\alpha(G')\omega^\ast(G')=|V(G')|=n!/2,\]
where $\omega^\ast(G')$ denotes the fractional clique number of $G'$. The product on the left-hand side equals $H_n(G)\overline{H_n(G)}$, so we obtained the bound in the lemma.
\end{proof}

We prove the upper bound of Theorem~\ref{thm:main} by establishing a lower bound for $\overline{H_n(C_4)}$ and combining it with Lemma~\ref{lem:vertextransitive}. We construct a large set of pairwise non-$C_4$-creating Hamiltonian paths by proving that a certain $C_4$-free graph contains enough of them. Thankfully, all the heavy lifting is already done. Namely, by Theorem~\ref{thm:kriv}, it is enough to find a dense, regular $C_4$-free graph with a large spectral gap, and it is well-known that some of the densest $C_4$-free graphs are regular with a large enough spectral gap (for the purpose of Theorem~\ref{thm:kriv}). We only have to deal with the fact that these dense $C_4$-free graphs are not simple, since they contain a small number of loops. The next lemma confirms that we can overcome this by removing the loops and some additional edges while preserving the properties that are necessary for our purposes. The proof is almost entirely present in the literature, hence we put it to an appendix.

\begin{lem}\label{lem:goodgraph}
For every odd prime $p$, there is a simple graph $G(p)$ with the following properties. $G(p)$ is $(p-1)$-regular on $p^2$ vertices, $G(p)$ is $C_4$-free, and each eigenvalue of $G(p)$ besides $p-1$ (which has multiplicity one) is of absolute value at most $\sqrt{4p-5}$.
\end{lem}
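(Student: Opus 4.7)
My plan is to take $G(p)$ to be the graph on $V = \mathbb{F}_p \times \mathbb{F}_p$ in which two distinct vertices $(x,y)$ and $(x',y')$ are joined by an edge iff $x \neq x'$ and $y + y' \equiv x x' \pmod{p}$. This is the standard Erd\H{o}s--R\'enyi/Wenger \emph{sum graph} in its cleanest simple-regular form: the restriction $x \neq x'$ simultaneously kills the $p$ loops (occurring exactly at points with $2y = x^2$) and the one extra intra-fiber edge in each column $\{x\} \times \mathbb{F}_p$.

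The combinatorial conditions are immediate. For any $(x,y) \in V$, each $x' \neq x$ yields a unique neighbor $(x', x x' - y)$, so $G(p)$ is $(p-1)$-regular on $p^2$ vertices. For $C_4$-freeness, suppose four distinct vertices $(x_i, y_i)$ with $i = 1,2,3,4$ form a cycle $v_1 v_2 v_3 v_4$. Summing the four edge equations in alternating pairs gives $x_1 x_2 + x_3 x_4 = x_2 x_3 + x_4 x_1$, i.e.\ $(x_1 - x_3)(x_2 - x_4) = 0$. Say $x_1 = x_3$: then $y_1 + y_2 = x_1 x_2 = x_3 x_2 = y_3 + y_2$ forces $y_1 = y_3$, contradicting $v_1 \neq v_3$.

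The spectral bound is the only nontrivial step. I would analyze it by first computing the spectrum of the auxiliary multigraph $\widetilde G(p)$ defined by the same sum relation \emph{without} the restriction $x \neq x'$. Its adjacency matrix $\widetilde A$ commutes with translation in the $y$-variable, so the partial Fourier basis $e_{x,\xi}(x', y') := \delta_{x, x'}\, e^{2\pi i \xi y'/p}$ block-diagonalizes it: one checks that
\[
\widetilde A\, e_{x, \xi} \;=\; \sum_{x'' \in \mathbb{F}_p} e^{2 \pi i \xi x x''/p}\, e_{x'', -\xi},
\]
so each $V_\xi \oplus V_{-\xi}$ is $\widetilde A$-invariant. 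On $V_0$, $\widetilde A$ acts as the $p \times p$ all-ones matrix (eigenvalues $p$ once and $0$ with multiplicity $p - 1$); on each $V_\xi \oplus V_{-\xi}$ with $\xi \neq 0$, $\widetilde A$ is an off-diagonal block whose rectangular blocks are, up to column permutation, the unnormalized $\mathbb{F}_p$-DFT, and hence have all singular values equal to $\sqrt p$, contributing eigenvalues $\pm \sqrt p$.

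The actual adjacency matrix $A$ of $G(p)$ equals $\widetilde A - C$, where $C$ is the adjacency matrix of the removed intra-fiber edges. Since $C$ decomposes as $p$ commuting fiberwise involutions $y \mapsto x^2 - y$, we have $\|C\|_2 = 1$, and Weyl's inequality yields $|\lambda_k(A) - \lambda_k(\widetilde A)| \leq 1$ for every $k$. A direct computation on $V_0$ shows $A|_{V_0} = J - I$ with exact eigenvalues $p - 1$ and $-1$, so the Perron value of $A$ is $p - 1$ (and the graph is connected because every other eigenvalue is at most $\sqrt p + 1 < p - 1$ for $p \geq 5$); every other eigenvalue lies in $[-\sqrt p - 1, \sqrt p + 1]$, and the inequality $\sqrt p + 1 \leq \sqrt{4p - 5}$ holds for $p \geq 5$, while the tiny case $p = 3$ is verified by inspection. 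The one step requiring real insight is the partial Fourier block-diagonalization of $\widetilde A$: although $G(p)$ is not a Cayley graph on $\mathbb{F}_p^2$, it is $y$-translation-invariant, and exploiting that single $\mathbb{F}_p$-symmetry puts the full spectrum within reach. All the rest --- degree count, $C_4$-freeness, Weyl perturbation, final numerical inequality --- is routine bookkeeping, which is why this proof is deferred to an appendix.
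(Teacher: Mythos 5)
Your proposal uses the same sum graph $G(p)$ as the paper, and the regularity and $C_4$-freeness arguments are essentially identical; the difference lies entirely in the spectral bound. The paper works directly with $A(G)^2$: it uses the common-neighbor structure of $\widetilde G$ (Lemma~\ref{claim:calculations}, parts (3)--(4)) to write $A(\widetilde G)^2$ in closed block form, observes that passing to $G$ perturbs each off-diagonal block by deleting exactly two ones per row, and then bounds the spectral radius of $S(G) = A(G)^2 - \mathbb{1}_{p^2 \times p^2}$ by its $\ell^\infty$ row-sum norm, yielding $\sqrt{4p-5}$ uniformly for all odd primes. You instead exploit the $y$-translation invariance to Fourier-block-diagonalize $A(\widetilde G)$ itself, obtaining the exact spectrum $\{p, \pm\sqrt{p}, 0\}$, and then pass to $A(G) = A(\widetilde G) - C$ via Weyl's inequality with $\|C\|_2 = 1$ (correct, since $C$ is a symmetric involution). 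This is a genuinely different and somewhat more conceptual route: it reveals the full spectrum of the unrestricted sum graph rather than just a radius bound, and for $p \geq 5$ it actually gives a sharper estimate, $\sqrt{p}+1 < \sqrt{4p-5}$. The tradeoff is that your bound narrowly fails to meet $\sqrt{4p-5}$ at $p = 3$ (where $\sqrt 3 + 1 > \sqrt 7$), forcing a separate hand-check, whereas the paper's $\ell^\infty$ argument handles all odd primes at once; since the downstream application (Theorem~\ref{thm:kriv}) only needs large $p$, either version suffices. Both proofs are correct.
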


The proof of Lemma~\ref{lem:goodgraph} can be found in Appendix~\ref{app:combinatorics}. Observe that when $n=p^2$ is a prime square, Theorem~\ref{thm:kriv} applied to the graph $G(p)$ provided by Lemma~\ref{lem:goodgraph} gives a lower bound on $\overline{H_n(C_4)}$ which together with Lemma~\ref{lem:vertextransitive} finishes the proof. In order to extend this line of thought to general $n$'s, we shall use a theorem ensuring that the primes are dense enough. The strongest such result is due to Baker, Harman and Pintz~\cite{primes}, but for our purposes a more classical version suffices.

\begin{thm}[Ingham~\cite{ingham}]\label{thm:denseenough}
For arbitrary $\eps>0$ and all large enough $n>0$, there is a prime $p$ in the interval $[n,n+n^{5/8+\eps}].$
\end{thm}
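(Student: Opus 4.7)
The plan is to invoke the classical Hoheisel--Ingham approach, deriving the existence of primes in short intervals from the explicit formula for the Chebyshev function $\psi(x) = \sum_{p^k \leq x} \log p$ combined with a zero-density estimate for the Riemann zeta function $\zeta(s)$. First I would apply the truncated Riemann--von Mangoldt explicit formula
\[\psi(x+h) - \psi(x) = h - \sum_{|\gamma| \leq T} \frac{(x+h)^\rho - x^\rho}{\rho} + O\!\left(\frac{x(\log x)^2}{T} + \log x\right),\]
in which $\rho = \beta + i\gamma$ runs over the non-trivial zeros of $\zeta$, and observe that a prime in $[x, x+h]$ is guaranteed as soon as both the zero sum and the tail error are $o(h)$, uniformly for $x$ of order $n$.

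Next, I would bound the sum over zeros using the pointwise estimate $|((x+h)^\rho - x^\rho)/\rho| \leq h\, x^{\beta-1}$, which reduces the problem to controlling $\sum_{|\gamma|\leq T} x^{\beta}$. Writing this as a Stieltjes integral against the counting function $N(\sigma, T) = \#\{\rho : \beta \geq \sigma,\ |\gamma| \leq T\}$ and integrating by parts, one arrives at a bound of the shape
\[\sum_{|\gamma|\leq T} x^\beta \ll x^{1/2}\, T \log T + (\log x) \int_{1/2}^{1} x^\sigma N(\sigma, T)\, d\sigma.\]
Here I would insert Ingham's zero-density estimate
\[N(\sigma, T) \ll T^{\tfrac{3(1-\sigma)}{2-\sigma}} (\log T)^{c}, \qquad \tfrac{1}{2} \leq \sigma \leq 1,\]
whose proof rests on a fourth-power moment bound for $\zeta(\tfrac12 + it)$ together with the functional equation.

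Finally, I would balance the two error sources by choosing $T$ as a suitable power of $x$. Maximizing the exponent of $x$ inside the integrand over $\sigma \in [1/2, 1]$ and optimizing the choice of $T$ (which turns out to be of the form $x^{3/8}$) yields an admissible interval length of $x^{5/8 + \eps}$, exactly as claimed. The main obstacle is not the combinatorial bookkeeping sketched above but the derivation of Ingham's zero-density estimate itself, which is a substantial piece of analytic number theory; since the paper uses the theorem only as a black box for constructing $G(p)$ at nearby prime moduli, I would in practice defer to Ingham's original paper or to a standard reference such as Titchmarsh's treatise on $\zeta(s)$ rather than reproduce the analytic argument from scratch.
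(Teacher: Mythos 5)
The paper does not prove this statement at all: it is invoked as a classical black box, with a bare citation to Ingham's 1937 paper and a remark that the stronger Baker--Harman--Pintz result would also do. Your plan ultimately defers to Ingham or Titchmarsh as well, so in that sense you and the paper take the same route.

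That said, the internal sketch you give of Ingham's argument contains a concrete inaccuracy in its central step, and it is worth flagging even though the paper itself does not reprove the result. Ingham's unconditional zero-density estimate $N(\sigma,T)\ll T^{3(1-\sigma)/(2-\sigma)}\log^{5}T$, which is the one that follows from the fourth-moment bound $\int_0^T|\zeta(\tfrac12+it)|^4\,dt\ll T\log^4 T$, does \emph{not} yield the exponent $5/8$. Near $\sigma=1$ that estimate has effective density exponent $3(1-\sigma)$, and feeding a bound $N(\sigma,T)\ll T^{b(1-\sigma)}\log^{B}T$ into the explicit-formula argument (via the standard principle that such a density bound together with the classical zero-free region gives primes in $[x,x+x^{\theta}]$ for every $\theta>1-1/b$) produces $\theta>2/3$, not $5/8$. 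One can see the failure directly in your own optimization: with $T=x^{3/8}$ the quantity $x^{\sigma-1}N(\sigma,T)$ has exponent $(1-\sigma)\bigl(-1+\tfrac{9}{8(2-\sigma)}\bigr)$, which is positive on a fixed subinterval of $\sigma$ near $1$, so the zero sum is not $o(h)$. The exponent $5/8$ in Ingham's 1937 theorem requires an additional analytic input beyond the fourth moment, namely the Hardy--Littlewood/Weyl subconvexity bound $\zeta(\tfrac12+it)\ll t^{1/6+\eps}$, inserted into Ingham's conditional density theorem $N(\sigma,T)\ll T^{(2+4c)(1-\sigma)}\log^{5}T$ (valid whenever $\zeta(\tfrac12+it)\ll t^{c}$). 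With $c=1/6+\eps$ this gives density exponent $b=8/3$, and hence $\theta>1-3/8=5/8$. None of this affects the correctness of the paper, since it cites the theorem rather than proving it, but your sketch as written would not reach $5/8$.
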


\begin{corr}\label{corr:denseenough}
For arbitrary $\eps>0$ and all large enough $n>0$, there is a prime square $p^2$ in the interval $[n,n+n^{13/16+\eps}].$
\end{corr}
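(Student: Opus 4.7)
The plan is to reduce the corollary to Theorem~\ref{thm:denseenough} by applying it at the scale of $\sqrt{n}$ rather than $n$. Given $\eps > 0$, I will first choose an auxiliary $\eps' > 0$ (for instance $\eps' := \eps$) and set $m := \lceil \sqrt{n} \rceil$. For all large enough $n$, Theorem~\ref{thm:denseenough} applied to $m$ furnishes a prime $p$ satisfying
\[m \leq p \leq m + m^{5/8+\eps'}.\]

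From $p \geq m \geq \sqrt{n}$ I immediately obtain $p^2 \geq n$, which handles the lower end of the interval. For the upper end, I would square the inequality above and expand:
\[p^2 \leq \bigl(m + m^{5/8+\eps'}\bigr)^2 = m^2 + 2\,m^{13/8+\eps'} + m^{10/8+2\eps'}.\]
Since $m = \lceil \sqrt{n}\rceil$, I have $m^2 \leq n + 2\sqrt{n} + 1$ and $m \leq \sqrt{n}+1$, so every term on the right is $n + O\bigl(n^{13/16 + \eps'/2}\bigr)$; in particular the dominant term $2m^{13/8+\eps'}$ is of order $n^{13/16 + \eps'/2}$. Choosing $\eps'$ strictly smaller than $2\eps$ (my initial choice $\eps' = \eps$ works) then yields $p^2 \leq n + n^{13/16 + \eps}$ for all sufficiently large $n$, as desired.

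The argument is essentially routine exponent bookkeeping: the key numeric observation is that if primes appear in gaps of size $x^{5/8+\eps'}$ near $x$, then prime squares appear in gaps of size roughly $x \cdot x^{5/8+\eps'} = x^{13/8+\eps'}$ near $x^2$, and expressing this back in terms of $n = x^2$ gives the exponent $13/16$. There is no real obstacle — one only needs to be slightly careful about the ceiling introduced by $m = \lceil \sqrt{n} \rceil$ and about choosing $\eps'$ small enough so that the error $n^{13/16+\eps'/2}$ is absorbed into $n^{13/16+\eps}$ for large $n$.
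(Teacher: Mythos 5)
Your proposal is correct and follows essentially the same route as the paper: apply Theorem~\ref{thm:denseenough} at scale $\sqrt{n}$, square the resulting prime, and check that the cross term $2\sqrt{n}\cdot n^{5/16+\eps/2}$ of order $n^{13/16+\eps/2}$ dominates and is absorbed into $n^{13/16+\eps}$. The only cosmetic difference is your use of $m=\lceil\sqrt{n}\rceil$ where the paper simply takes the real number $n^{1/2}$; this changes nothing.
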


\begin{proof}
Let $n>0$ be sufficiently large. By Theorem~\ref{thm:denseenough}, there is a prime $p$ in the interval $[n^{1/2},n^{1/2}+n^{5/16+\eps/2}]$. Then, \[p^2\in[n,n+2n^{13/16+\eps/2}+n^{5/8+\eps}] \subset [n,n+n^{13/16+\eps}]\]
as desired.
\end{proof}

Now we are ready to prove the upper bound of Theorem~\ref{thm:main}. Let $n>0$ be large enough, and let $m$ be the smallest integer such that $n \leq m$ and $m=p^2$ for some prime $p$. Applying Theorem~\ref{thm:kriv} to the $C_4$-free graph $G(p)$ provided by Lemma~\ref{lem:goodgraph}, we see readily
\[\overline{H_m(C_4)}\geq m!\left( \frac{\sqrt{m}-1}{m}\right)^m (1-o(1))^m.\]
As $H_n(C_4)$ is non-decreasing, the previous display combined with Lemma~\ref{lem:vertextransitive} yields
\[H_n(C_4)\leq H_m(C_4)\leq\frac{m!/2}{\overline{H_m(C_4)}}\leq\left(\frac{m}{\sqrt{m}-1}\right)^m(1+o(1))^m
=m^{\frac{m}{2}+o\left(\frac{m}{\log{m}}\right)}.\]
Since by Corollary~\ref{corr:denseenough} we also have $m\leq n+n^{5/6}$, we conclude that
\[H_n(C_4) \leq m^{\frac{m}{2}+o\left(\frac{m}{\log{m}}\right)} \leq n^{\frac{n}{2}+o\left(\frac{n}{\log{n}}\right)}.\]
The proof of Theorem~\ref{thm:main} is complete.

\section{Longer even cycles}\label{sec:longer}

We obtain upper bounds on $H_n(C_{2k})$ for $k>2$ similarly to the $k=2$ case. We shall apply Theorem~\ref{thm:kriv} to suitable $C_{2k}$-free graphs, and use that the necessary number theoretic objects are dense enough. The densest known $C_{2k}$-free graphs were constructed by Lazebnik, Ustimenko and Woldar in \cite{lazebnik}, but they are bipartite and their spectral properties are not yet sufficiently understood. Hence we cannot apply Theorem~\ref{thm:kriv} directly to them. The graphs that have all the necessary properties are the Ramanujan graphs constructed by Margulis~\cite{margulis} and independently by Lubotzky, Phillips and Sarnak~\cite{lubotzky}\footnote{The proof of (4.19)--(4.20) in this celebrated paper is sketchy at one point, so we provide some additional details. The equation before \cite[Lemma~4.4]{lubotzky} can be justified for $n=p^k$ by analyzing the relevant local densities in Siegel's mass formula. This is sufficient for the proof, and one can even obtain a direct proof along these lines. Alternatively, for $n$ coprime with $2q$, the general shape of Fourier coefficients of Eisenstein series reveals that $C(n)=\sum_{d\mid n}dF(d,n)$, where $F:\mathbb{N}\times\mathbb{N}\to\mathbb{C}$ is periodic of period $16q^2$ in both variables. From here, one can finish the proof by a straightforward extension of \cite[Lemma~4.4]{lubotzky}.}.

\begin{thm}[Lubotzky--Phillips--Sarnak~\cite{lubotzky}]
If $p$ and $q$ are unequal primes congruent to $1$ modulo $4$, and $p$ is a quadratic residue modulo $q$, then there is a simple graph $G^{p,q}$ with the following properties. $G^{p,q}$ is $(p+1)$-regular on $q(q^2-1)/2$ vertices, the girth of $G^{p,q}$ is at least $2\log_p{q}$, and each eigenvalue of $G^{p,q}$ besides $p+1$ (which has multiplicity one) is of absolute value at most $2\sqrt{p}$.
\end{thm}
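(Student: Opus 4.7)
The plan is to realise $G^{p,q}$ as a Cayley graph of $\mathrm{PSL}_2(\mathbb{F}_q)$ with respect to a symmetric set of $p+1$ elements coming from integer quaternions of norm $p$. First I would invoke Jacobi's four square theorem: since $p \equiv 1 \pmod 4$, the number of representations of $p$ as $a_0^2+a_1^2+a_2^2+a_3^2$ with $a_0$ odd and positive is exactly $p+1$. The corresponding Lipschitz quaternions $\alpha_1,\dotsc,\alpha_{p+1}$ form a set closed under conjugation $\alpha \mapsto \bar\alpha$ (pairing each $\alpha$ with its inverse up to the norm), and because $q \equiv 1 \pmod 4$ there is an $\iota \in \mathbb{F}_q$ with $\iota^2 = -1$, giving an embedding of the quaternion algebra into $M_2(\mathbb{F}_q)$. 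The hypothesis that $p$ is a quadratic residue mod $q$ ensures that, after normalising determinants, the images lie in $\mathrm{PSL}_2(\mathbb{F}_q)$, producing a symmetric generating set $S$ of size $p+1$. The Cayley graph $G^{p,q} = \mathrm{Cay}(\mathrm{PSL}_2(\mathbb{F}_q), S)$ then has $q(q^2-1)/2$ vertices and is $(p+1)$-regular.

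For the girth bound, I would argue that a closed walk of length $\ell$ based at the identity corresponds to a word $\alpha_{i_1}^{\eps_1}\dotsm \alpha_{i_\ell}^{\eps_\ell}$ whose image in $\mathrm{PSL}_2(\mathbb{F}_q)$ is trivial. Using unique factorisation in the Lipschitz (or Hurwitz) order together with the fact that the $\alpha_i$ are ``reduced'' generators of a free semigroup modulo units, any such word has quaternion norm $p^\ell$. If $\ell < 2\log_p q$, then the entries of the corresponding integer matrix are of size at most $O(p^{\ell/2}) = o(q)$, so no non-trivial reduction can occur modulo $q$, forcing the word to be trivial already over $\mathbb{Z}$. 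Hence no short closed walks exist, which is the girth claim. (In particular $G^{p,q}$ is simple provided $p+1 < q+1$, which is implicit.)

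The main obstacle, by far, is the Ramanujan eigenvalue bound $|\lambda| \leq 2\sqrt p$ for all non-trivial eigenvalues. My plan would be to interpret the adjacency operator of $G^{p,q}$ as a Hecke-type operator. Concretely, $S$ is a single orbit under the action of the units, and the sum $\sum_{s \in S} s$ acts on functions on $\mathrm{PSL}_2(\mathbb{F}_q)$ as the Brandt matrix / Hecke operator $T_p$ associated to the definite quaternion algebra ramified at $\infty$. By the Jacquet--Langlands correspondence, the eigenvalues of $T_p$ on the non-trivial spectrum match Hecke eigenvalues of weight-two holomorphic cusp forms on a suitable congruence subgroup of $\mathrm{SL}_2(\mathbb{Z})$. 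The Ramanujan--Petersson conjecture for such forms, proved by Eichler and Deligne via the Weil conjectures for the relevant Kuga--Sato varieties, then gives precisely $|\lambda_p| \leq 2\sqrt p$. This representation-theoretic translation, together with the appeal to deep arithmetic geometry, is the step where the real work lies; everything else (regularity, vertex count, girth) is an essentially combinatorial computation with quaternions modulo $q$.
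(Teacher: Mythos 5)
The paper does not prove this theorem at all; it is imported as a black box from Lubotzky--Phillips--Sarnak, with only a footnote supplying extra detail on one step of their argument (the justification of (4.19)--(4.20), i.e.\ the representation-number identity used in the theta-series part of the eigenvalue bound). So there is no ``paper's own proof'' to match your sketch against; what you have written is a summary of the LPS proof itself.

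As such a summary it is essentially correct, and it captures the three main stages: the Cayley-graph construction from the $p+1$ integral quaternions of norm $p$ with $a_0$ odd positive and $a_1,a_2,a_3$ even, reduced into $\mathrm{PSL}_2(\mathbb{F}_q)$ via a square root of $-1$ mod $q$; the girth bound via unique factorisation in the Hurwitz order (a reduced word of length $\ell$ has norm $p^\ell$, so if it becomes central mod $q$ with $\ell<2\log_p q$ the imaginary coordinates, being of size $<\sqrt{p^\ell}<q$, must vanish over $\mathbb{Z}$); and the appeal to Eichler--Deligne for the Ramanujan bound. One difference in framing worth noting: LPS do not invoke the Jacquet--Langlands correspondence explicitly. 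They instead relate counts of closed walks in $G^{p,q}$ (equivalently, representation numbers of $p^k$ by a quaternary form with congruence conditions mod $2q$) to Fourier coefficients of a theta series, split off the Eisenstein part, and apply Deligne's bound to the cuspidal remainder. Your Brandt-matrix/Jacquet--Langlands phrasing is a mathematically equivalent, more representation-theoretic route to the same input, and is the framing most modern expositions use; the theta-series route is the one actually in \cite{lubotzky} and is exactly where the footnote in the present paper intervenes. Two small slips: the relevant definite quaternion algebra (the Hamilton quaternions) is ramified at $2$ and $\infty$, not at $\infty$ alone; and for simplicity of $G^{p,q}$ what is needed (and implicitly assumed in applications here, where $q>p^k$) is that the girth exceeds $2$, i.e.\ $q>p$, not merely $p+1<q+1$ as a raw count.
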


For a suitable pair of primes $(p,q)$ and for $m:=q(q^2-1)/2$, we can apply Theorem~\ref{thm:kriv} to the graph $G^{p,q}$ to get a set of
\[m!\left(\frac{p+1}{m}\right)^m (1+o(1))^m\]
Hamiltonian paths on $m$ vertices. We shall assume that $q>p^k$ in order to guarantee that $G^{p,q}$ is $C_{2k}$-free, while we shall try to work with $p$ as large as possible (for a given $q$) in order to maximize the above number of Hamiltonian paths.

\begin{defi}\label{goodness}
We say that a prime $q\equiv 1\pmod{4}$ is \emph{$(\eps ,k)$-good} if there is a prime $p\equiv 1\pmod{4}$ such that $q\in (p^k,(1+\eps )p^k)$ and $p$ is a quadratic residue modulo $q$. We also call \emph{$(\eps,k)$-good} the corresponding Ramanujan graphs $G^{p,q}$.
\end{defi}

The following lemma states, roughly, that for every pair of positive reals $\eps$ and $k$, the $(\eps,k)$-good graphs are dense enough for our purposes.

\begin{lem}\label{lem:denseenough_pq}
Let $\eps$ and $k$ be arbitrary positive reals. Then for all sufficiently large $n>0$, there is an $(\eps,k)$-good graph $G^{p,q}$ on $m$ vertices with $m\in(n,(1+\eps)n)$.
\end{lem}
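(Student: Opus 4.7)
The plan is to reduce the existence of an $(\eps,k)$-good graph of the desired size to a short-interval-plus-arithmetic-progression question for primes, and then to resolve the latter via the Bombieri--Vinogradov theorem combined with quadratic reciprocity. Given $n$ large, I would first choose auxiliary constants $\alpha, \delta > 0$ depending only on $\eps$ and $k$ (say $\delta = \eps/10$ and $\alpha$ small enough that $(1+\alpha)^{3k}(1+\delta)^{3} \le 1+\eps$), and set $P_0 = P_0(n)$ so that $P_0^{3k}/2$ just exceeds $n$. These choices ensure that for any primes $p \in (P_0,(1+\alpha)P_0)$ and $q \in (p^k,(1+\delta)p^k)$ we automatically have $m = q(q^2-1)/2 \in (n,(1+\eps)n)$. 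A direct application of the prime number theorem in arithmetic progressions supplies $\gg P_0/\log P_0$ primes $p \equiv 1 \pmod 4$ in $(P_0,(1+\alpha)P_0)$, so only the search for $q$ remains.

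Next, since $p,q \equiv 1 \pmod 4$, quadratic reciprocity gives $\bigl(\tfrac{p}{q}\bigr) = \bigl(\tfrac{q}{p}\bigr)$, so the requirement that $p$ be a quadratic residue modulo $q$ is equivalent to the requirement that $q$ lie in a nonzero square class modulo $p$. Together with $q \equiv 1 \pmod 4$, this restricts $q$ to a union $S_p$ of $(p-1)/2$ of the $\phi(4p) = 2(p-1)$ invertible residue classes modulo $4p$. Heuristically, the number of primes $q$ satisfying all these conditions in $(p^k,(1+\delta)p^k)$ should be
\[\frac{|S_p|}{\phi(4p)}\bigl(\mathrm{li}((1+\delta)p^k) - \mathrm{li}(p^k)\bigr) \sim \frac{\delta\, p^k}{4k \log p},\]
which grows without bound in $p$.

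To make this rigorous I would apply Bombieri--Vinogradov with $X := (1+\alpha)^k(1+\delta)P_0^k$: for any $A > 0$ there exists $B$ such that
\[\sum_{q' \le X^{1/2}/(\log X)^B}\ \max_{(a,q')=1}\ \max_{y \le X}\, \left| \pi(y;q',a) - \frac{\mathrm{li}(y)}{\phi(q')} \right| \ll_A \frac{X}{(\log X)^A}.\]
Because $k > 2$ (the range relevant for Theorem~\ref{thm:longevencycle}), every candidate modulus $4p$ satisfies $4p \le 4(1+\alpha)P_0 \ll P_0^{k/2}/(\log P_0)^B \sim X^{1/2}/(\log X)^B$ for $n$ large, so these moduli lie in the BV range. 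Restricting the outer sum to such moduli and using $\sum_{a \in S_p} \le |S_p|\,\max_a \le (p/2)\max_a$, the total error contribution summed over all candidate $p$ and over $a \in S_p$ is $\ll P_0 \cdot X/(\log X)^A \ll P_0^{k+1}/(\log P_0)^A$. Dividing by the $\gg P_0/\log P_0$ candidate primes yields an average of $O(P_0^k/(\log P_0)^{A-1})$ per $p$, which for $A = 3$ is $o$ of the main term $\delta P_0^k/(4k\log P_0)$. By Markov's inequality, at least one prime $p$ in the candidate interval has total error below the main term, hence admits at least one valid $q$, and the resulting pair $(p,q)$ produces an $(\eps,k)$-good graph $G^{p,q}$ of the desired size.

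The hard part of the plan is the Bombieri--Vinogradov step: one needs $k > 2$ so that the moduli $4p \sim P_0 \sim X^{1/k}$ fall well below the threshold $X^{1/2}/(\log X)^B$, and the averaging must be set up so that the factor $|S_p| \le p/2 \sim P_0$ lost in passing from $\max_a$ to $\sum_{a \in S_p}$ is still absorbed by the polylog savings in $X/(\log X)^A$ once divided by the count $\gg P_0/\log P_0$ of candidate primes. The Dirichlet/PNT step for primes $p \equiv 1 \pmod 4$ in a short interval and the quadratic reciprocity reduction are routine by comparison.
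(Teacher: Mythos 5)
Your argument is correct for $k>2$ --- which is all the paper actually uses, since Theorem~\ref{thm:longevencycle} is restricted to integers $k>2$ --- but it proceeds by Bombieri--Vinogradov rather than by the paper's route via Heath-Brown's mean value estimate for real character sums (Theorem~\ref{thm:heathbrown}). The paper reduces the lemma to Corollary~\ref{cor:compl_numb_theor} (the existence of an $(\eps,k)$-good prime $q$ in $(x,(1+\eps)x)$) and argues by contradiction: if no such $q$ existed, then $\left(\frac{p}{q}\right)=-1$ for every pair $(p,q)$ in the relevant dyadic-type ranges, which makes $\sum_q\bigl|\sum_p\left(\frac{p}{q}\right)\bigr|^2\gg x^{1+2/k}/(\log x)^3$, while Heath-Brown's bilinear bound caps the same sum at $\ll x^{1+3/(2k)}+x^{1/2+2/k}$ for a suitable $\kappa$; the contradiction holds for every positive real $k$. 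You instead apply quadratic reciprocity (legitimate since $p\equiv q\equiv 1\pmod 4$) to recast the condition as $q$ lying in a union of $(p-1)/2$ invertible residue classes modulo $4p$, and then count such primes $q$ in $(p^k,(1+\delta)p^k)$ by averaging Bombieri--Vinogradov over the $\gg P_0/\log P_0$ candidate moduli $4p$. Your bookkeeping is sound: with $A=3$ the summed error $\ll P_0^{k+1}/(\log P_0)^3$ is dominated by the total main term $\asymp \delta P_0^{k+1}/(k(\log P_0)^2)$, so some $p$ admits a valid $q$. The trade-off, which you correctly flag, is that Bombieri--Vinogradov requires the moduli $4p\asymp P_0\asymp X^{1/k}$ to fall below the threshold $X^{1/2}/(\log X)^B$, which forces $k>2$. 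Your proof therefore establishes a slightly weaker statement than the lemma as written (which claims arbitrary $k>0$), even though it covers everything the paper subsequently needs; the Heath-Brown route avoids that restriction and is what lets the paper state the lemma for all positive $k$. One small detail worth tightening in your write-up: with $P_0$ chosen so that $P_0^{3k}/2$ ``just exceeds $n$'', the lower endpoint $m>\frac{P_0^k(P_0^{2k}-1)}{2}$ can dip just below $n$; choosing $P_0^{3k}/2=(1+\eps/10)n$, say, and shrinking $\alpha,\delta$ accordingly fixes this.
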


The proof of Lemma~\ref{lem:denseenough_pq} relies on the following powerful result of Heath-Brown~\cite{heathbrown}, in which $\left(\frac{n}{m}\right)$ stands for the Jacobi symbol, $\csillag{\textstyle\sum}$ indicates restriction to positive odd square-free integers, and $f \ll_\kappa g$ means that $f=O_\kappa(g)$.

\begin{thm}[Heath-Brown~\cite{heathbrown}]\label{thm:heathbrown}
Let $M$, $N$ be positive integers, and let $a_1,\dotsc,a_N$ be arbitrary complex numbers. Then
for any $\kappa>0$, we have
\[\csillag\sum_{m\leq M}\left|\csillag\sum_{n\leq N}a_n\left(\frac{n}{m}\right)\right|^2\ll_\kappa(MN)^\kappa(M+N)\csillag\sum_{n\leq N}|a_n|^2.\]
\end{thm}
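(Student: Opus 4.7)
The plan is to follow Heath-Brown's route via the large sieve duality principle, quadratic reciprocity, and Poisson summation on the Jacobi symbol. By the standard large sieve duality, the stated bound is equivalent to its dual
\[\csillag\sum_{n\leq N}\left|\csillag\sum_{m\leq M}b_m\left(\frac{n}{m}\right)\right|^2 \ll_\kappa (MN)^\kappa (M+N)\csillag\sum_{m\leq M}|b_m|^2.\]
Opening the square produces the bilinear form $\sum_{m_1,m_2}b_{m_1}\overline{b_{m_2}}\csillag\sum_{n\leq N}(n/m_1m_2)$, with $m_1,m_2$ odd and square-free. Writing $m_1m_2=d^2s$, where $d=\gcd(m_1,m_2)$ and $s$ is odd and square-free, one has $(n/m_1m_2)=(n/s)$ whenever $\gcd(n,d)=1$. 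The diagonal contribution $s=1$ (which forces $m_1=m_2$) yields the main term $\ll N\csillag\sum_m|b_m|^2$, and the task reduces to controlling $\csillag\sum_{n\leq N}(n/s)$ on average over $s>1$.

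The P\'olya--Vinogradov bound $O(\sqrt{s}\log s)$ is too weak on average, so I would smooth the $n$-sum and apply Poisson summation with period $8s^2$, the natural modulus once the sign ambiguities from quadratic reciprocity are accounted for. The resulting Fourier expansion involves twisted quadratic Gauss sums
\[\tau(h,s)=\sum_{n\bmod 8s^2}\left(\frac{n}{s}\right)e\!\left(\frac{hn}{8s^2}\right),\]
which either vanish or have absolute value $\ll\sqrt{s}$, with non-vanishing frequencies $h$ confined to an explicit arithmetic progression dictated by quadratic reciprocity. Substituting these evaluations rewrites the $n$-sum as a dual sum over a frequency variable $h$ in which the roles of $s$ and $h$ essentially mirror those of $m$ and $n$ in the original form.

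This duality, together with the Gauss sum bound, reduces the problem to estimating a new bilinear sum of divisor-type quantities. Summing over the permissible arithmetic progressions for $h$ and invoking standard divisor bounds contributes the factor $(MN)^\kappa$, while the additive structure $M+N$ reflects the diagonal $s=1$ term (giving $N$) together with the zero-frequency contribution $h=0$ in the Poisson dual (giving $M$ after reciprocity). Combining these ingredients in the dualized expression produces the claimed bound $(MN)^\kappa(M+N)\csillag\sum|a_n|^2$.

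The main obstacle is the careful bookkeeping of the Jacobi-symbol sign conventions. Reciprocity produces sign factors depending on $m,n\bmod 4$, and the quadratic Gauss sum attached to the character $(\cdot/s)$ is periodic only modulo $8s^2$; this forces the Poisson modulus to be $8s^2$ and introduces small arithmetic corrections (depending on residues modulo $8$ and on the coprimality constraint to $d$) that must be disentangled before the self-similar dual form becomes visible. A secondary technical point is that the restriction to square-free moduli is not preserved under dualization, so a M\"obius-type inversion must be inserted to restore square-freeness in the new variable. The divisor-sum losses generated by these two operations are precisely what is absorbed by the arbitrary exponent $\kappa$ in the final bound.
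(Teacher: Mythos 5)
This statement is quoted in the paper as a known theorem of Heath-Brown (Acta Arith.\ 72 (1995), 235--275); the paper itself offers no proof of it, so the only benchmark is Heath-Brown's original argument, whose overall strategy (duality, diagonal extraction, Poisson summation, self-similarity of the dual form) your sketch does echo in outline.

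As a proof, however, your proposal has a genuine gap at its center. After Poisson summation you observe that the roles of $s$ and $h$ ``essentially mirror'' those of $m$ and $n$, and then assert that combining the ingredients produces the claimed bound. That combination is the entire content of the theorem. The dual bilinear form produced by Poisson summation does not have arbitrary coefficients (they are specific arithmetic weights carrying divisor factors and coprimality constraints inherited from the $d^2s$ decomposition), its summation ranges differ from $M$ and $N$, and one must set up and close an actual recursion --- in Heath-Brown's paper this is a delicate multi-lemma bootstrapping argument over dyadic ranges that occupies most of the forty pages --- before the self-similarity yields anything. Without specifying the induction hypothesis, the parameter being inducted on, and why each pass strictly improves the exponent until it reaches $\kappa$, the argument does not terminate. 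There are also concrete local errors: for odd square-free $s$ the map $n\mapsto\left(\frac{n}{s}\right)$ is periodic in $n$ with period $s$ or $4s$ (according to $s\bmod 4$), not $8s^2$; a complete sum over a period of length $8s^2$ against $e(hn/(8s^2))$ is, when nonzero, of size comparable to $s^{3/2}$ rather than $\sqrt{s}$, so the Gauss-sum bound you invoke is false as normalized, and the $M+N$ threshold you are aiming for would not emerge. These are symptoms of the same underlying issue: the sketch names the right ingredients but does not carry out the analysis that makes them fit together, so it cannot be accepted as a proof of the stated estimate.
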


We make use of Theorem~\ref{thm:heathbrown} through the following corollary.

\begin{corr}\label{cor:compl_numb_theor}
Let $\eps$ and $k$ be arbitrary positive reals. Then for all sufficiently large $x>0$, there is an $(\eps,k)$-good prime in the interval $(x,(1+\eps)x)$.
\end{corr}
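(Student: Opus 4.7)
The plan is to show that for $x$ large the number $N(x)$ of pairs of primes $p,q$ with $p\equiv q\equiv 1\pmod 4$, $q\in(x,(1+\eps)x)$, $q\in(p^k,(1+\eps)p^k)$, and $\left(\tfrac{p}{q}\right)=1$ is positive; any such $q$ is then $(\eps,k)$-good. Writing $\mathbb{1}\bigl[\left(\tfrac{p}{q}\right)=1\bigr]=\tfrac12\bigl(1+\left(\tfrac{p}{q}\right)\bigr)$, one has $2N(x)=M(x)+E(x)$, where $M(x)$ drops the Legendre-symbol condition and $E(x)$ is the associated bilinear character sum. A double application of the prime number theorem in arithmetic progressions mod $4$ -- to the outer $q$-sum over an interval of length $\eps x$ and to the inner $p$-sum over an interval of length $\asymp x^{1/k}$ -- gives $M(x)\sim c_{\eps,k}\,x^{1+1/k}/(\log x)^2$ with $c_{\eps,k}>0$. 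It therefore suffices to show $|E(x)|<M(x)$.

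For $|E(x)|$ I would apply quadratic reciprocity, valid since $p,q\equiv 1\pmod 4$, to replace $\left(\tfrac{p}{q}\right)$ by $\left(\tfrac{q}{p}\right)$, and swap the order of summation so that $p$ (in an interval of length $\asymp x^{1/k}$ about $x^{1/k}$) plays the role of modulus. To fit the hypotheses of Theorem~\ref{thm:heathbrown}, the $p$-dependent range of $q$ must be decoupled: partition the $p$-interval into $O(1/\Delta)$ sub-intervals $[P,P(1+\Delta)]$ with $\Delta=\Delta(\eps,k)$ a small positive constant. On each sub-interval the true range $J(p)$ for $q$ agrees with a fixed interval $J_P\subset(x,(1+\eps)x)$ except in boundary zones of total length $O(\Delta x)$. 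The trivial bound $|\left(\tfrac{q}{p}\right)|\leq 1$ combined with PNT in AP shows that the cumulative contribution of these boundary pairs across all sub-intervals is $O(\Delta)\cdot M(x)$.

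What remains on each sub-interval is the decoupled bilinear sum $S_P=\sum_p\sum_{q\in J_P}\left(\tfrac{q}{p}\right)$. Applying Cauchy--Schwarz in $p$ and using positivity to enlarge the outer sum from primes $\equiv 1\pmod 4$ to all odd squarefree $m\leq P(1+\Delta)$ places the right-hand side in the exact shape of Theorem~\ref{thm:heathbrown}, with $a_q=\mathbb{1}[q\in J_P,\ q\equiv 1\pmod 4,\ q\ \text{prime}]$, $M\asymp x^{1/k}$, and $N\asymp x$. A direct computation then yields $|S_P|\ll x^{1+1/(2k)+\kappa'}/\log x$ for any $\kappa'>0$ (by taking the $\kappa$ in Theorem~\ref{thm:heathbrown} correspondingly small). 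Since $\tfrac{1}{2k}<\tfrac{1}{k}$, summing over the $O(1/\Delta)$ pieces produces an $o(M(x))$ contribution; together with the boundary estimate this yields $|E(x)|\leq(C\Delta+o(1))M(x)$. Taking $\Delta$ sufficiently small and $x$ sufficiently large forces $|E(x)|<M(x)$, so $N(x)>0$. The main obstacle is this decoupling step: the constraint $q\in(p^k,(1+\eps)p^k)$ genuinely couples $p$ and $q$, whereas Theorem~\ref{thm:heathbrown} only applies to pure bilinear sums, and one must check that the unavoidable boundary losses from the sub-interval splitting remain controlled.
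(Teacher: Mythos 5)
Your proposal is correct, and it reaches the same conclusion via Theorem~\ref{thm:heathbrown}, but the route is genuinely different from the paper's. You argue directly: write $\mathbb{1}\bigl[\left(\tfrac{p}{q}\right)=1\bigr]=\tfrac12\bigl(1+\left(\tfrac{p}{q}\right)\bigr)$, show the main term $M(x)\asymp x^{1+1/k}/(\log x)^2$ dominates, and bound the bilinear error $E(x)$ by flipping $\left(\tfrac{p}{q}\right)$ to $\left(\tfrac{q}{p}\right)$ via reciprocity, partitioning the $p$-range into $O(1/\Delta)$ blocks to decouple the constraint $q\in(p^k,(1+\eps)p^k)$, and applying Cauchy--Schwarz plus Heath-Brown on each block. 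The paper instead argues by contradiction and sidesteps the decoupling problem entirely by a careful choice of ranges: it sums $q$ over $\bigl((1+\eps/2)x,(1+\eps)x\bigr)$ and $p$ over $\bigl(x^{1/k},((1+\eps/2)x)^{1/k}\bigr)$, which forces $p^k<q<(1+\eps)p^k$ \emph{automatically} for every pair in the sum. Under the assumption that no $(\eps,k)$-good prime exists, every surviving pair then has $\left(\tfrac{p}{q}\right)=-1$, so the sum $\sum_q\bigl|\sum_p\left(\tfrac{p}{q}\right)\bigr|^2$ is as large as it can possibly be, of order $x^{1+2/k}/(\log x)^3$, and Heath-Brown (applied directly with $q$ as the modulus variable, no reciprocity, no Cauchy--Schwarz) caps it at roughly $x^{1+3/(2k)}+x^{1/2+2/k}$, a contradiction. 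You correctly flag decoupling as ``the main obstacle,'' and your subinterval splitting with boundary-loss bookkeeping does handle it; but the paper's interval trick removes the obstacle at the source and yields a noticeably shorter proof. Both give the same qualitative result, and your extra work buys nothing quantitative here, so if you revise you may as well adopt the nested-interval choice.
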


\begin{proof}
Let $x>0$ be sufficiently large in terms of $\eps$ and $k$, and let us assume that there is no $(\eps,k)$-good prime in the interval $(x,(1+\eps)x)$. We shall derive a contradiction by examining the sum
\begin{equation}\label{eq:considersum}
\sum_{\substack{(1+\eps/2)x<q<(1+\eps)x \\ \text{$q\equiv 1\ (\bmod{4})$ is a prime}}}\
\Biggl|\sum_{\substack{\sqrt[k]{x}<p<\sqrt[k]{(1+\eps/2)x} \\ \text{$p\equiv 1\ (\bmod{4})$ is a prime}}}\left(\frac{p}{q}\right)\Biggr|^2.
\end{equation}
As every prime pair $(p,q)$ occurring in this sum satisfies
\[p,q\equiv 1\pmod{4}\qquad\text{and}\qquad p^k<(1+\eps/2)x<q<(1+\eps)x<(1+\eps)p^k,\]
we have either $\left(\frac{p}{q}\right)=-1$ or $p=q$ by Definition~\ref{goodness} and the initial assumption on $x$.
As a result, by the prime number theorem for arithmetic progressions, the sum \eqref{eq:considersum} can be estimated from below as
\[\gg_{\eps,k}\frac{x}{\log{x}} \left( \frac{x^{1/k}}{\log{x}} \right)^2 =\frac{x^{1+2/k}}{(\log{x})^3}.\]
On the other hand, by Theorem~\ref{thm:heathbrown} applied for
\[\kappa:=\frac{\min(1,k)}{2k+2},\qquad M:=(1+\eps)x,\qquad N:=\sqrt[k]{(1+\eps/2)x},\]
and $a_n$ being the indicator function of the primes $p$ occurring in \eqref{eq:considersum}, the sum \eqref{eq:considersum} can be estimated from above as
\[\ll_{\eps,k}x^{(1+1/k)\kappa}(x+x^{1/k})x^{1/k}\leq x^{1+3/(2k)}+x^{1/2+2/k}.\]
Comparing the above two bounds for \eqref{eq:considersum}, we get a contradiction (for $x>0$ sufficiently large in terms of $\eps$ and $k$).
\end{proof}

\begin{proof}[Proof of Lemma~\ref{lem:denseenough_pq}] It suffices to show that there is an $(\eps,k)$-good prime $q$ such that
\[n<q(q^2-1)/2<(1+\eps)n.\]
A slightly stronger inequality is
\[(2n)^{1/3}+1<q<((1+\eps)2n)^{1/3}.\]
For $n\to\infty$, the ratio of the two sides tends to $(1+\eps)^{1/3}$, which exceeds $1$. Hence, for $n>0$ sufficiently large, Corollary~\ref{cor:compl_numb_theor} implies the existence of a suitable $q$.
\end{proof}

\begin{proof}[Proof of the upper bound of Theorem~\ref{thm:longevencycle}]
Let $\eps\in(0,1/4)$ be fixed, and let $n>0$ be sufficiently large. By Lemma~\ref{lem:denseenough_pq}, there is an $(\eps,k)$-good graph $G^{p,q}$ on $m\in (n,(1+\eps)n)$ vertices. As $G^{p,q}$ is $C_{2k}$-free, it follows from Theorem~\ref{thm:kriv} that
\[\overline{H_m(C_{2k})}\geq m!\left(\frac{p+1}{m}\right)^m(1-o(1))^m.\]
As $H_n(C_{2k})$ is non-decreasing in $n$, the previous display combined with Lemma~\ref{lem:vertextransitive} yields
\[H_n(C_{2k})\leq H_m(C_{2k})\leq\frac{m!/2}{\overline{H_m(C_{2k})}}\leq\left(\frac{m}{p+1}\right)^m(1+o(1))^m.\]
Since $G^{p,q}$ is $(\eps,k)$-good and $m=q(q^2-1)/2$, we can estimate
\[p>\left(\frac{q}{1+\eps}\right)^{1/k}>\left(\frac{(2m)^{1/3}}{1+\eps}\right)^{1/k}>m^\frac{1}{3k}.\]
We conclude, for $n>0$ sufficiently large in terms of $\eps$ and $k$, that
\[H_n(C_{2k})\leq m^{\left(1-\frac{1}{3k}\right)m+o(m)}\leq n^{\left(1-\frac{1}{3k}\right)n+\eps n}.\]
This implies the upper bound of Theorem~\ref{thm:longevencycle}, because $\eps\in(0,1/4)$ is arbitrary.
\end{proof}

\section{$H_n(C_4)$ and the reversing permutations conjecture}\label{sec:connection}

The main driving force behind the study of $H_n(C_{2k})$ is the following conjecture. We say that two permutations $\pi_1$ and $\pi_2$ of $[n]$ are reversing if, as vectors of length $n$, there are two coordinates that contain the same two elements $\{a,b\} \subset [n]$, but in reversed order. Let $RP(n)$ be the maximum number of pairwise reversing permutations of $[n]$.

\begin{conj}[Körner \cite{kornertoldme}]\label{conj:reversing}
There is a constant $C>0$ such that $RP(n)$, the maximum number of pairwise reversing permutations of $[n]$, is at most $C^n$.
\end{conj}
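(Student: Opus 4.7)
The plan is to mimic the strategy of Theorem~\ref{thm:main}: combine a vertex-transitive duality in one direction with a structural construction of a large non-reversing family in the other.

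First, I would observe that the reversing relation is invariant under both left and right composition by any $\sigma\in S_n$, so the graph on $S_n$ whose edges are non-reversing pairs is vertex-transitive. The proof of Lemma~\ref{lem:vertextransitive} then yields
\[RP(n)\cdot\overline{RP(n)}\leq n!,\]
where $\overline{RP(n)}$ is the maximum number of pairwise non-reversing permutations of $[n]$. Thus the conjecture reduces to proving $\overline{RP(n)}\geq n!/C^n$ for some constant $C>0$.

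The natural construction comes from the bipartite reformulation: view a permutation as a perfect matching of $K_{n,n}$ between positions and values; then two permutations are reversing if and only if their symmetric difference contains a $4$-cycle, so every family of perfect matchings of a common $C_4$-free bipartite host $H\subseteq K_{n,n}$ is pairwise non-reversing. Taking $H$ to be a $\sqrt{n}$-regular $C_4$-free bipartite graph (furnished, up to an irrelevant $o(n)$-factor in the regularity, by the bipartite double cover of the graph of Lemma~\ref{lem:goodgraph} or by the point-line incidence graph of a finite projective plane), the Falikman--Egorychev resolution of the Van der Waerden permanent conjecture produces at least $(\sqrt{n}/n)^n\cdot n!=n!/n^{n/2}$ perfect matchings in $H$. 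Consequently $\overline{RP(n)}\geq n!/n^{n/2+o(n)}$ and $RP(n)\leq n^{n/2+o(n)}$, which is exactly the analogue of Theorem~\ref{thm:main} for reversing permutations.

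The main obstacle, and the reason the conjecture is still open, is that any construction relying on a single $C_4$-free host is capped at this $n^{n/2}$ barrier: by the Kovari--Sos--Turan bound, $H$ has at most $O(n^{3/2})$ edges, and so its permanent cannot exceed $n^{n/2+o(n)}$. To pass from $n^{n/2+o(n)}$ to $C^n$ one must produce pairwise non-reversing families that are \emph{not} contained in any single $C_4$-free bipartite graph, where different pairs avoid reversing for different, ``global'' reasons. A natural attempt is a hierarchical block construction, partitioning $[n]$ into $\Theta(\log n)$ scales of nested blocks and choosing permutations so that any two differ at many scales at once, thereby forcing their symmetric difference to consist only of long cycles rather than any $C_4$. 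Quantifying such a construction to produce at least $n!/C^n$ distinct permutations is where all the combinatorial difficulty lies, and would be the main step of any serious attempt on the conjecture.
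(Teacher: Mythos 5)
The statement you were given is not a theorem: it is an open conjecture of K\"orner, which the paper does not prove and makes no claim to prove. Accordingly there is no ``paper proof'' to compare against, and your submission correctly recognizes this --- it does not purport to close the conjecture, but instead surveys the state of the art and explains the obstruction.

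Your survey is accurate and consistent with what the paper itself says in Section~5 and the concluding remarks. The vertex-transitivity observation is sound: the reversing relation is preserved by both left and right composition with any $\sigma\in S_n$, so the analogue of Lemma~\ref{lem:vertextransitive} gives $RP(n)\cdot\overline{RP(n)}\le n!$. The translation to perfect matchings of $K_{n,n}$ and $C_4$-creation is exactly the paper's equivalence $RP\leftrightarrow M_{2n}(C_4)$, and the permanent-based lower bound on $\overline{RP(n)}$ recovers Cibulka's upper bound $RP(n)\le n^{n/2+o(n)}$, which the paper cites as the best known. Your ``$n^{n/2}$ barrier'' remark mirrors the paper's observation (via Cibulka's two-sided estimate on $\overline{M_{2n}(C_4)}$) that the vertex-transitive/fractional-clique argument cannot, by itself, beat $n^{n/2+o(n)}$. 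One small point of care: for the permanent argument you want a genuinely $C_4$-free bipartite $d$-regular host with $d=\Theta(\sqrt n)$; the incidence graph of a projective plane is the cleanest choice, while the bipartite double cover route also works (the double cover of a simple $C_4$-free graph is $C_4$-free) but needs the parities and orders to line up. Your final paragraph, on hierarchical constructions forcing all cycles in the symmetric difference to be long, is a reasonable heuristic direction but, as you acknowledge, is not quantified and does not constitute a proof. In short: no gap to flag, because you did not claim to prove the conjecture; what you wrote is a faithful account of why it remains open.
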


In \cite{evencycle} the following equivalent form of Conjecture~\ref{conj:reversing} was observed. Let $M_{2n}(C_{2k})$ be the maximum number of perfect matchings of $K_{2n}$ where every pairwise union contains a $C_{2k}$.

\begin{conj}[Körner \cite{kornertoldme}]
There is a constant $C>0$ such that $M_{2n}(C_4)$, the maximum number of pairwise $C_4$-creating perfect matchings of $K_{2n}$, is at most $C^n$.
\end{conj}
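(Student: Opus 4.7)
The plan is to adapt the combination of vertex-transitivity and dense $C_4$-free expanders that powered Theorem~\ref{thm:main} from Hamiltonian paths to perfect matchings. Writing $\overline{M_{2n}(C_4)}$ for the maximum number of pairwise non-$C_4$-creating perfect matchings of $K_{2n}$, I would first establish the matchings analog of Lemma~\ref{lem:vertextransitive}: the graph whose vertices are the perfect matchings of $K_{2n}$, with edges joining non-$C_4$-creating pairs, is vertex-transitive under the natural $S_{2n}$-action, so the same application of \cite[Lemma~7.2.2]{godsil} yields
\[M_{2n}(C_4)\cdot\overline{M_{2n}(C_4)}\leq (2n-1)!!.\]
Since $(2n-1)!!=(2n/e)^{n+o(n)}$ by Stirling, an exponential bound $M_{2n}(C_4)\leq C^n$ reduces to exhibiting at least $(2n/e)^{n+o(n)}/C^n$ pairwise non-$C_4$-creating perfect matchings of $K_{2n}$.

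The most direct construction is to take all perfect matchings of a single sufficiently dense, spectrally nice $C_4$-free host graph $H$ on $[2n]$; any two such matchings are automatically non-$C_4$-creating because their union stays inside the $C_4$-free graph $H$. One would then invoke a matchings analog of Krivelevich's Theorem~\ref{thm:kriv} — for instance the Schrijver or Friedland lower bounds on the number of perfect matchings in regular graphs with large spectral gap — applied to a loopless modification of the polarity graph $G(p)$ of Lemma~\ref{lem:goodgraph}, to produce the desired family in a clean, uniform manner.

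The main obstacle, and precisely the reason the conjecture is still open, is that this single-host strategy is superexponentially short of the target. A $C_4$-free graph on $2n$ vertices has maximum degree at most $\sqrt{2n-1}$, so by Br\'egman's theorem (or its Kahn--Lov\'asz non-bipartite refinement) it contains at most roughly $(\sqrt{2n}/e)^{n+o(n)}$ perfect matchings, a factor of $(2n)^{n/2+o(n)}$ less than what is needed. A successful proof must therefore combine matchings drawn from many different $C_4$-free host graphs while still enforcing the pairwise non-$C_4$-creation property globally, which is delicate because the union of two matchings coming from different hosts can contain a $C_4$ lying in neither host. One possible workaround is to fix a common $C_4$-free backbone (say, most of a 1-factor of a dense $C_4$-free polarity graph) and to parametrize the permissible completions modulo small local swaps; dually, one could seek an entropy-compression argument showing that any family of pairwise $C_4$-creating perfect matchings admits an $O(n)$-bit joint description extracted from its mandatory shared $C_4$'s. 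Bridging this $(2n)^{n/2+o(n)}$ gap is the main obstacle, and it is exactly what keeps K\"orner's conjecture open.
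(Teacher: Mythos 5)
The statement you were asked about is a \emph{conjecture} (K\"orner's conjecture, stated in the paper as an equivalent form of Conjecture~\ref{conj:reversing} on reversing permutations); the paper offers no proof of it, and it remains open. Your proposal correctly recognizes this, and your diagnosis of why the natural attack fails agrees with the paper's own discussion in the concluding remarks: Cibulka proved $n^{\frac{n}{2}-o(n)}\leq\overline{M_{2n}(C_4)}\leq n^{\frac{n}{2}+o(n)}$, so the matchings analog of Lemma~\ref{lem:vertextransitive} can never yield anything better than $M_{2n}(C_4)\leq n^{\frac{n}{2}+o(n)}$ --- which is exactly the best known upper bound (Cibulka) and is superexponentially far from $C^n$. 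Your accounting of the gap, $(2n/e)^{n+o(n)}$ total matchings versus at most $(2n)^{\frac{n}{2}+o(n)}$ matchings obtainable from any single $C_4$-free host, is the right quantitative picture.

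One small correction: a $C_4$-free graph on $2n$ vertices does \emph{not} have maximum degree at most $\sqrt{2n-1}$ (a star is $C_4$-free). What the K\H{o}v\'ari--S\'os--Tur\'an bound controls is the number of edges, hence the \emph{geometric mean} of the degrees, which is what Br\'egman/Kahn--Lov\'asz actually needs; the conclusion $(2n)^{\frac{n}{2}+o(n)}$ survives, but the justification should go through the edge count rather than the maximum degree. Beyond that, there is nothing to compare against the paper, since no proof exists there: a correct response here is simply to flag the statement as conjectural rather than to attempt a derivation.
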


The best known upper bound for both $RP(n)$ and $M_{2n}(C_4)$ is $n^{\frac{1}{2}n+o(n)}$ due to Cibulka~\cite{cibulka}. The best published lower bound is $RP(n)\geq 8^{\lfloor n/5\rfloor}\geq 1.515^{n-4}$, see \cite{reversefree}. A slightly better lower bound can be obtained by observing that the four ``incomplete permutations''
\[ \begin{array}{c}
(1, 2, 3, *, \dotsc , *) \\
(3, 4, 1, *, \dotsc , *) \\
(2, 1, 4, *, \dotsc , *) \\
(4, 3, 2, *, \dotsc , *) \end{array} \]
are already pairwise reversing. Indeed, this implies $RP(n)\geq 4 RP(n-3)$ for $n\geq 3$, whence $RP(n)\geq 4^{\lfloor n/3\rfloor}\geq 1.587^{n-2}$. In this section we consider questions strongly related to the question of determining $H_n(C_4)$, and one of these will turn out to be strongly related to the reversing permutations conjecture. Two Hamiltonian paths can form a $C_4$ in their union in essentially three ways, so let us introduce a notation for these.

\begin{defi}
Let $H^1,H^2,H^3$ denote the (unique) ways that two Hamiltonian paths can form a $C_4$ in such a way that the longest consecutive chain of edges (in the $C_4$) that is contained in one of the Hamiltonian paths is $1,2,3$, respectively. Let $\mathcal{H}:=\{H^1,H^2,H^3\}$ and for every $S \subset \mathcal{H}$, let $H_n(C_4,S)$ denote the maximum number of pairwise $C_4$-creating Hamiltonian paths, where for each pair there is a $C_4$ that is formed in a way that is in $S$.
\end{defi}

With this notation, $H_n(C_4)= H_n(C_4,\mathcal{H})$, and both Lemma~\ref{claim:newlower} and the original lower bound of Cohen, Fachini and Körner actually establish a lower bound on $H_n(C_4,\{H_2\})$.

We shall use the following lemma, which is implicitly present in the proof of \cite[Theorem~5.2]{oddcycle}, but for the reader's convenience we reprove it here.

\begin{lem}\label{lem:largesystem}
For every $n$, there is a set $J$ of directed Hamiltonian paths of $K_n$ such that no two paths in $J$ are $C_4$-creating in the $H_3$ way and $|J|\geq n!/3^n$.
\end{lem}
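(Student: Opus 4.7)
The plan is to exhibit $J$ as a single fiber of a well-chosen ``signature'' map from directed Hamiltonian paths to $\{0,1,2\}^n$, and then use pigeonhole.

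First I would set up the signature. For each directed Hamiltonian path $P$ of $K_n$, let $\pi_P(v)\in[n]$ denote the position of vertex $v$ along $P$, and define
\[\sigma(P)\in\{0,1,2\}^n,\qquad \sigma(P)_v:=\pi_P(v)\bmod 3.\]
Since the codomain has size $3^n$, by pigeonhole there exists some $\tau\in\{0,1,2\}^n$ such that at least $n!/3^n$ of the $n!$ directed Hamiltonian paths satisfy $\sigma(P)=\tau$. Let $J$ be the set of these paths; it remains to show no two members of $J$ are $H^3$-$C_4$-creating.

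For this, I would unpack the $H^3$-configuration. If $P_1,P_2\in J$ were $H^3$-$C_4$-creating, then (say) there are four consecutive vertices $a,b,c,d$ along $P_1$ such that $\{a,d\}$ is an edge of $P_2$ (the opposite case being symmetric; note the fourth $C_4$-edge really must come from the other path, because the Hamiltonian path $P_1$ is acyclic and hence cannot already contain $\{a,d\}$). Four-consecutiveness in $P_1$ means $\pi_{P_1}(a)-\pi_{P_1}(d)=\pm 3$, so $\pi_{P_1}(a)\equiv\pi_{P_1}(d)\pmod 3$. Since $\sigma(P_1)=\sigma(P_2)=\tau$, also $\pi_{P_2}(a)\equiv\pi_{P_2}(d)\pmod 3$. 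But $\{a,d\}$ being an edge of $P_2$ forces $\pi_{P_2}(a)-\pi_{P_2}(d)=\pm 1\not\equiv 0\pmod 3$, a contradiction. Therefore $J$ has the required property.

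There is no real obstacle here: the whole proof is driven by the arithmetic observation that ``distance $3$ in $P$'' is precisely the condition that is invisible modulo $3$, which is exactly what the $H^3$-pattern (three consecutive edges of the $C_4$ in one path, one edge in the other) needs in order to clash with ``distance $1$ in $P$''. The only thing worth double-checking in a final write-up is the WLOG step, i.e.\ that the $H^3$-configuration really does force one path to contain a length-$3$ sub-path $a\text{-}b\text{-}c\text{-}d$ whose closing edge $\{a,d\}$ belongs to the other path; this is immediate from the definition of $H^3$ together with the fact that a Hamiltonian path is a tree and therefore cannot contain a $C_4$ on its own.
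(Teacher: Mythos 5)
Your proof is correct and takes essentially the same approach as the paper: the paper colors vertices by position modulo $3$ and double-counts over colorings with fixed class sizes, whereas you take the signature map $P\mapsto(\pi_P(v)\bmod 3)_v$ and apply pigeonhole to its fibers, which is the same idea in a marginally cleaner packaging. The key verification step (positions of the $C_4$-closing pair differ by $3$ in one path but by $1$ in the other, and both differences must agree modulo $3$) matches the paper's observation exactly.
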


\begin{proof}
Let us color the vertices of $K_n$ with colors $1,2,3$ such that exactly $\lceil n/3 \rceil$ vertices are colored $1$ and exactly $\lfloor n/3 \rfloor$ vertices are colored $3$. Let $J$ be the set of directed Hamiltonian paths whose vertices are colored cyclically as $1,2,3,1,2,3,\dotsc$, in the order determined by the path. No two paths in $J$ are $C_4$-creating in the $H_3$ way, since in every path in $J$, any two vertices of distance $3$ have the same color. Hence it suffices to show that $|J|\geq n!/3^n$.

For every directed Hamiltonian path $H$ of $K_n$, there is a unique $3$-coloring as above for which $H \in J$. The size of $J$ does not depend on the particular coloring that we chose, hence it equals $n!$ divided by the number of $3$-colorings. The number of $3$-colorings is at most $3^n$, hence $|J|\geq n!/3^n$ as claimed.
\end{proof}

The following lemma states that when considering a problem $H_n(C_4,S)$ for some $S$, the $H^3$ way can be safely ignored unless we care about exponential factors.

\begin{lem} \label{lem:randomtechnique}
 We have, for every $S \subset \mathcal{H}$,
\[H_n(C_4,S \setminus \{H^3\})\leq H_n(C_4,S)\leq 3^n H_n(C_4,S \setminus \{H^3\}).\]
\end{lem}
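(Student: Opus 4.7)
The bound $H_n(C_4,S\setminus\{H^3\})\leq H_n(C_4,S)$ is immediate, since any family realizing the left-hand side already satisfies the weaker requirement imposed by $S$. The plan for the upper bound is to extract, from an optimum family for $S$, a sub-family of size at least $H_n(C_4,S)/3^n$ in which every pair is $C_4$-creating via a type in $S\setminus\{H^3\}$. The engine is Lemma~\ref{lem:largesystem} combined with a random relabeling argument.

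Concretely, let $P$ be a set of $|P|=H_n(C_4,S)$ pairwise $C_4$-creating undirected Hamiltonian paths, and let $\tilde P$ denote the set of $2|P|$ directed paths obtained by orienting each member of $P$ in both directions. I would apply a uniformly random bijection $\sigma\colon[n]\to[n]$ to the vertex set of $K_n$. Since the symmetric group acts transitively on directed Hamiltonian paths, each $\tilde H\in\tilde P$ satisfies
\[\Pr\bigl[\sigma(\tilde H)\in J\bigr]=\frac{|J|}{n!}\geq\frac{1}{3^n},\]
where $J$ is the set produced by Lemma~\ref{lem:largesystem}. Linearity of expectation and pigeonhole furnish a concrete $\sigma$ with $|\sigma(\tilde P)\cap J|\geq 2|P|/3^n$. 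Let $P'$ be the set of undirected Hamiltonian paths underlying $\sigma(\tilde P)\cap J$; then $|P'|\geq|P|/3^n$, since each element of $P'$ contributes at most two directed representatives to the intersection.

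It remains to verify that $P'$ is admissible for $H_n(C_4,S\setminus\{H^3\})$. Since relabeling is a graph isomorphism, all $C_4$-creation relations are preserved, so $P'$ is still pairwise $C_4$-creating in the $S$ sense. The type of $C_4$ formation ($H^1$, $H^2$, or $H^3$) depends only on the undirected union of the two paths involved, so if some pair in $P'$ were $H^3$-creating, then a corresponding pair of their orientations lying in $J$ would be $H^3$-creating as well, contradicting the defining property of $J$. Hence no pair in $P'$ is $H^3$-creating, and every pair must instead be $C_4$-creating via some type in $S\setminus\{H^3\}$, which yields $H_n(C_4,S\setminus\{H^3\})\geq|P|/3^n$. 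The only delicate step is the directed-versus-undirected bookkeeping: including both orientations in $\tilde P$ exactly cancels the halving that occurs when projecting back to undirected paths, preserving the $3^{-n}$ density from Lemma~\ref{lem:largesystem}.
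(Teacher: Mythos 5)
Your proposal is correct and follows essentially the same strategy as the paper: apply a uniformly random relabeling to the large $H^3$-avoiding family from Lemma~\ref{lem:largesystem}, use a first-moment argument to find a relabeling whose image intersects the given family in a $3^{-n}$ fraction, and observe that the surviving pairs must realize their $C_4$ via a type in $S\setminus\{H^3\}$. The only cosmetic difference is the directed-versus-undirected bookkeeping: the paper first passes from the directed set $J$ to an undirected set $I$ with $|I|\geq 3^{-n}n!/2$ and then randomizes, while you keep $J$ directed, double your family to $\tilde P$, and halve at the end; both manipulations cancel the same factor of $2$ and yield the identical bound.
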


\begin{proof}
The lower bound follows readily from the definition of $H_n(C_4,S)$, so we focus on the upper bound. By Lemma~\ref{lem:largesystem}, there is a set $I$ of (undirected) Hamiltonian paths of $K_n$ such that no two paths in $I$ are $C_4$-creating in the $H^3$ way and $|I|\geq 3^{-n}n!/2$. Now let $S \subset \mathcal{H}$, and let $X$ be a set of pairwise $C_4$-creating Hamiltonian paths such that each pair of paths in $X$ is $C_4$-creating in a way that is in $S$. Let $\sigma(I)$ be a version of $I$ where the labels of $K_n$ are permuted by a permutation chosen uniformly from $S_n$. Since
\[\mathbb{E}(|X \cap \sigma(I)|)=|X|\frac{|I|}{n!/2}\geq\frac{|X|}{3^n},\]
there is a relabeled version of $I$ whose intersection with $X$ has size at least $|X|/3^n$. Since no two paths in $I$, or in any relabeled version of $I$, are $C_4$-creating in the $H^3$ way, the proof is complete.
\end{proof}

In the present paper, Hamiltonian paths that are $C_4$-creating only in the $H^3$ way are negligible.

\begin{remark} The quantity $H_n(C_4,\{H^3\})$ is called $P(n,4)$ in \cite{oddcycle}, and the best known upper bound on it is the number of special $3$-colorings considered in the proof of Lemma~\ref{lem:largesystem}, which is of size $3^{n-o(n)}$. It would be interesting to decide whether $H_n(C_4,\{H^3\})$ exceeds $2^n$ for large $n$, since it was proved in \cite{oddcycle} that its natural generalizations, $P(n,k)$ obey an upper bound of size $2^{n+o_k(n)}$.
\end{remark}

Therefore, three possibilities remain for $S$ if we are only interested in the superexponential growth rate: $H_n(C_4,\{H^1\})$, $H_n(C_4,\{H^2\})$, $H_n(C_4,\{H^1,H^2\})$. Two of these are asymptotically answered by the proofs of Theorem~\ref{thm:main} and Lemma~\ref{claim:newlower}:
\[n^{n/2-o(n)} \leq H_n(C_4,\{H^2\}) \leq H_n(C_4,\{H^1,H^2\}) \leq n^{n/2+o(n)}.\]
The only remaining case $H_n(C_4,\{H^1\})$ is strongly related to the reversing permutations conjecture. It was already observed in \cite[Lemma~1]{k=4} that
\[H_n(C_4,\{H^1\}) \geq 2^{n/4-o(n)}RP(n/2).\]
Using the same ideas it is not hard to prove the similar inequality
\[H_{2n}(C_4,\{H^1\}) \geq M_{2n}(C_4)M_n(C_4)M_{\lfloor n/2 \rfloor}(C_4)M_{\lfloor \lfloor n/2 \rfloor/2\rfloor}(C_4) \cdots.\]
This shows that any improvement on the constant $1/2$ in the upper bound provided by Theorem~\ref{thm:main},
\[H_{2n}(C_4,\{H^1\}) \leq (2n)^{\frac{1}{2}(2n)+o(n)}=n^{n+o(n)},\]
would prevent the possibility of a lower bound of the form $n^{n/2-o(n)} \leq M_{2n}(C_4).$

We finish this section by establishing an equivalent form of Conjecture~\ref{conj:reversing}.

\begin{conj}\label{conj:reverseq}
There is a constant $C>0$ such that $ H_n(K_{2,4}) \leq C^n$.
\end{conj}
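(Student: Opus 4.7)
The plan is to establish Conjecture~\ref{conj:reverseq} conditionally on Conjecture~\ref{conj:reversing}, by showing that any family of pairwise $K_{2,4}$-creating Hamiltonian paths can be injected, at the cost of only a constant factor in the exponential base, into a family of pairwise reversing permutations. This is the direction of the equivalent-form statement that actually delivers the bound $H_n(K_{2,4})\leq C^n$.

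First I would unpack the structural consequence of $K_{2,4}\subset P_1\cup P_2$. Because every vertex has degree at most two in a Hamiltonian path, if $\{u,v\}$ and $\{w_1,w_2,w_3,w_4\}$ are the two sides of the $K_{2,4}$, both $u$ and $v$ must be interior in both paths, with $\deg_{P_j}(u)=\deg_{P_j}(v)=2$ for $j\in\{1,2\}$, the two $P_j$-neighborhoods of $u$ must be disjoint (and similarly for $v$), and
\[N_{P_1}(u)\cup N_{P_2}(u)\;=\;N_{P_1}(v)\cup N_{P_2}(v)\;=\;\{w_1,w_2,w_3,w_4\}.\]
Classifying how $v$'s $2{+}2$ split among $\{w_1,\dotsc,w_4\}$ relates to $u$'s split produces a constant-size list of witness \emph{types} (identical split, fully swapped split, or overlap in exactly one vertex), so the whole $K_{2,4}$ witness is coded by six labelled vertices plus one of finitely many types.

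Next, I would orient each Hamiltonian path canonically (say, smaller endpoint first), at an exponential cost of at most $2^n$ that is absorbed into the final constant, and view it as a permutation of $[n]$. For each fixed witness type, I would show that two paths that are $K_{2,4}$-creating through that type yield a pair of reversing permutations at positions that can be read off from the $K_{2,4}$-witness: for instance, in the ``fully swapped'' type, the consecutive blocks $w_a\,u\,w_b$ in $P_1$ and $w_a\,v\,w_b$ in $P_2$ (for the same pair $\{w_a,w_b\}$) display explicit positions where $\{u,v\}$ appears but swapped. Pigeonholing over the constantly many types shows that any family of $N$ pairwise $K_{2,4}$-creating paths contains a pairwise reversing subfamily of size $\Omega(N)$, and Conjecture~\ref{conj:reversing} then gives $N\leq C^n$ as required.

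The main obstacle will be the ``overlap in one'' type, where $u$ and $v$ share a $P_j$-neighbor and the $K_{2,4}$ reduces to two consecutive triples $u\,w\,v$ living in different paths rather than to a clean edge-swap. Here the reversing witness must be extracted from a more global comparison of the two paths, using the position of the common neighbor against its partner in the other path, and one has to verify both that this extraction is well-defined from the $K_{2,4}$-data and that the resulting map from pairs of paths to pairs of reversing permutations is injective modulo controllable overcounting. Once this case is dealt with, checking that the orientation choice, the witness-type pigeonhole, and the application of the RP bound collapse into a single exponential constant is routine.
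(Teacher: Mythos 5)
You should first note that the statement you were asked to prove is an open conjecture: the paper does not prove it, but shows in Lemma~\ref{lem:equivalence} that it is equivalent to Conjecture~\ref{conj:reversing}. Your conditional argument (assuming the reversing-permutations conjecture) is therefore really an attempt at one direction of that equivalence, namely extracting pairwise reversing objects from a pairwise $K_{2,4}$-creating family, and it is against that half of the proof of Lemma~\ref{lem:equivalence} that it should be measured. There the gap is concrete: two permutations are reversing only if the swapped pair $\{a,b\}$ occupies the \emph{same two coordinates} in both. If you orient each Hamiltonian path and read it as a vertex sequence indexed by positions $1,\dotsc,n$, then in the ``fully swapped'' (good) configuration the blocks $x_1\,y_1\,x_2$ in $P_1$ and $x_1\,y_2\,x_2$ in $P_2$ sit, in general, at entirely different positions of the two sequences, so no reversing pair of coordinates is produced. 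The paper circumvents this by indexing the vectors not by positions but by the edges of $K_n$: each edge joining two vertices at distance two along a path is labelled by the midpoint vertex. A good $K_{2,4}$ then forces the two coordinates $\{x_1,x_2\}$ and $\{x_3,x_4\}$ to carry $\{y_1,y_2\}$ in opposite orders. The price is that one obtains partial vectors of length $\binom{n}{2}$ with only $n-2$ filled entries, and connecting these to $RP$ requires the recursive completion $RP(m)\geq |V(n)|\cdot RP(m-(n-2))$ together with a $\limsup$ argument; this is not something that can be ``absorbed into the final constant.''

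The second gap is the pigeonhole over witness types. The type is attached to a \emph{pair} of paths, not to a single path, so you cannot partition the $N$ paths into constantly many classes and keep an $\Omega(N)$ subfamily in which every pair has the same type; a Ramsey-type argument would lose much more than a constant factor. The paper's solution is structural: every $K_{2,4}$ that is not of the good (fully swapped) form forces one path to contribute three consecutive edges, hence the pair is $C_4$-creating in the $H^3$ way, and this possibility is eliminated wholesale by intersecting $X$ with a random relabeling of the $3$-coloring family of Lemma~\ref{lem:largesystem}, at a cost of only $3^n$ --- harmless when the question is whether the growth is exponential. Your troublesome ``overlap in one'' type is exactly this discarded case, and your ``identical split'' type cannot occur at all (it would embed a $C_4$ into a single Hamiltonian path). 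So the skeleton of your plan is sound and parallels the paper, but the two steps you treat as routine --- producing matching coordinates and making all pairs uniform in type --- are precisely where the paper has to introduce its two nontrivial devices.
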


\begin{lem}\label{lem:equivalence}
Conjecture~\ref{conj:reverseq} is equivalent to Conjecture~\ref{conj:reversing}.
\end{lem}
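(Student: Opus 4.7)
The plan is to establish the equivalence in both directions. Since by \cite{evencycle} Conjecture~\ref{conj:reversing} admits the equivalent formulation ``$M_{2n}(C_4) \le C^n$'', I may freely work with either $RP(n)$ or $M_{2n}(C_4)$, and it suffices to relate $H_n(K_{2,4})$ to these quantities by bounds that cost at most an exponential factor in $n$.

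For the direction Conjecture~\ref{conj:reverseq} $\Rightarrow$ Conjecture~\ref{conj:reversing}, I would prove the explicit inequality $RP(n) \le H_{3n}(K_{2,4})$. To each $\pi \in S_n$ I associate the Hamiltonian path on $[3n]$ given by
\[
H_\pi = (1,\,n+\pi(1),\,2n+1,\,2,\,n+\pi(2),\,2n+2,\,\dotsc,\,n,\,n+\pi(n),\,3n),
\]
so that vertex $k$ occupies position $3k-2$, vertex $n+\pi(k)$ position $3k-1$, and vertex $2n+k$ position $3k$. If $\pi_1,\pi_2$ are reversing at positions $i \neq j$ with $\pi_1(i)=\pi_2(j)=a$ and $\pi_1(j)=\pi_2(i)=b$, then the vertex $n+a$ is internal in both paths, sitting at position $3i-1$ in $H_{\pi_1}$ (with neighbors $i$ and $2n+i$) and at position $3j-1$ in $H_{\pi_2}$ (with neighbors $j$ and $2n+j$); an identical computation applies to $n+b$. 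Since $i \neq j$, both $n+a$ and $n+b$ have union-neighborhood $\{i,j,2n+i,2n+j\}$, so $\bigl(\{n+a,n+b\},\{i,j,2n+i,2n+j\}\bigr)$ is the bipartition of a $K_{2,4}$ in $H_{\pi_1}\cup H_{\pi_2}$. Feeding this into an exponential bound for $H_{3n}(K_{2,4})$ immediately yields Conjecture~\ref{conj:reversing}.

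For the direction Conjecture~\ref{conj:reversing} $\Rightarrow$ Conjecture~\ref{conj:reverseq}, the plan is to establish an upper bound of the form $H_n(K_{2,4}) \le 2^{O(n)}\, M_{2n}(C_4)$. To each oriented Hamiltonian path $H=(v_1,\dotsc,v_n)$ on $[n]$ I attach the perfect matching
\[
M_H = \bigl\{\{v_i,v_{i+1}'\} : 1 \le i \le n-1\bigr\} \cup \bigl\{\{v_1',v_n\}\bigr\}
\]
on the bipartite double $[n]\cup[n]'$. If $(H_1,H_2)$ create a $K_{2,4}$ of the ``neighborhood-swap'' type, with $u,v$ internal in both paths and $N_{H_1}(u)=N_{H_2}(v)=\{a,b\}$, $N_{H_1}(v)=N_{H_2}(u)=\{c,d\}$, then for correctly chosen orientations the four edges $\{u,b'\},\{v,d'\}\in M_{H_1}$ and $\{u,d'\},\{v,b'\}\in M_{H_2}$ close up to a $C_4$ on $\{u,v,b',d'\}$. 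For the remaining, more ``folded'' $K_{2,4}$ configurations — where $u,v$ share an internal common neighbor in each of $H_1$ and $H_2$ — I would first apply a random 3-coloring argument in the spirit of Lemmas~\ref{lem:largesystem} and \ref{lem:randomtechnique} to filter out these configurations at the cost of an exponential factor, reducing to the neighborhood-swap case. A pigeonhole choice over the two orientations per path then extracts a large pairwise $C_4$-creating subfamily of matchings, yielding the desired bound. The principal obstacle is precisely this second direction: the orientation choice must be handled carefully (a naive random choice gives only a density-$\tfrac{1}{2}$ ``$C_4$-creating'' graph, which by itself does not yield a clique of the required size), and the filtering of the folded configurations must be arranged so that the combined multiplicative loss remains $2^{O(n)}$.
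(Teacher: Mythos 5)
Your first direction is correct and essentially the same idea as the paper's, just with a slightly more efficient bookkeeping: the paper embeds a reversing family of permutations of $[n]$ into Hamiltonian paths of $K_{4n}$ by alternating fixed and variable vertices (Figure~\ref{fig:paths}), whereas your explicit zig-zag on $K_{3n}$ does the same job and gives the clean inequality $RP(n)\leq H_{3n}(K_{2,4})$. Either way this half goes through.

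The second direction, however, has a genuine gap that is more serious than the ``orientation headache'' you flag. Your successor-matching $M_H$ is an orientation-dependent encoding, and there are pairs of Hamiltonian paths that create a \emph{good} $K_{2,4}$ (the neighborhood-swap type) for which \emph{no} choice of orientations makes $M_{H_1}\cup M_{H_2}$ contain a $C_4$. Concretely, take $n=6$, $H_1=(1,5,2,3,6,4)$ and $H_2=(3,5,4,2,6,1)$: both $5$ and $6$ have union-neighborhood $\{1,2,3,4\}$, so the pair creates a good $K_{2,4}$; yet in all four orientation combinations the graph $M_{H_1}\cup M_{H_2}$ decomposes into two $6$-cycles and contains no $C_4$. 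The structural reason is that a $C_4$ on $\{u,v,y_1',y_2'\}$ forces $s_1(u)=s_2(v)$ and $s_1(v)=s_2(u)$, but the directions of the two $P_3$'s around $u$ and $v$ inside $H_2$ relative to $H_1$ can be ``anti-aligned,'' in which case this system is unsolvable. Hence the reduction $H_n(K_{2,4})\leq 2^{O(n)}M_{2n}(C_4)$ does not hold via this encoding, and no pigeonhole over orientations rescues it. The paper sidesteps this entirely by using an orientation-free encoding: for each Hamiltonian path $H$, build a partially filled vector indexed by the \emph{edges} of $K_n$, where coordinate $\{a,b\}$ is labeled by $v$ precisely when $a\text{--}v\text{--}b$ is a subpath of $H$. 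A good $K_{2,4}$ then automatically produces a reversal between the two vectors on the coordinates $\{x_1,x_2\}$ and $\{x_3,x_4\}$, with no orientation to choose. After the $H^3$-filtering (which you correctly identify), the paper then pads these partial vectors into full permutations of $[m]$ via the inequality $RP(m)\geq |V(n)|\cdot RP(m-(n-2))$ for $m\geq\binom{n}{2}$, and takes a limit to conclude. You need the edge-indexed idea (or an equivalent orientation-free encoding) for this direction; the matching encoding does not work.
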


\begin{proof} We aim to prove that if one of the conjectures fails, then the other one fails, too. Note that if Conjecture~\ref{conj:reverseq} fails, then by the inequality
\[H_m(K_{2,4})H_n(K_{2,4}) \leq H_{m+n}(K_{2,4})\]
and Fekete's Subadditive Lemma, $\sqrt[n]{H_n(K_{2,4})}$ tends to infinity. Similarly, if Conjecture~\ref{conj:reversing} fails, then $\sqrt[n]{RP(n)}$ tends to infinity.

Assume that Conjecture~\ref{conj:reversing} is false. For each $n\geq 1$, there exists a set $S$ of pairwise reversing permutations of $[n]$ such that $\sqrt[n]{|S|}\to\infty$. Starting from $S$, we construct a large set $T$ of pairwise $K_{2,4}$-creating Hamiltonian paths of $K_{4n}$ as follows. We arrange the elements of $[4n]$ as in Figure~\ref{fig:paths}. We fix the position of the unlabeled vertices and vary the position of the labeled vertices according to the elements of $S$. The resulting set $T$ of Hamiltonian paths will be pairwise $K_{2,4}$-creating, because $S$ is pairwise reversing. Moreover, $\sqrt[4n]{|T|}\to\infty$, because $\sqrt[n]{|S|}\to\infty$. Hence Conjecture~\ref{conj:reverseq} is false.

Assume that Conjecture~\ref{conj:reverseq} is false. For each $n\geq 3$, there exists a set $X$ of pairwise $K_{2,4}$-creating Hamiltonian paths of $K_n$ such that $\sqrt[n]{|X|}\to\infty$. A pair of Hamiltonian paths in $X$ can create a $K_{2,4}$ in several ways, hence we shall first ``filter'' $X$ so that each ``surviving'' pair creates a $K_{2,4}$ in a fixed way. Observe that if a $K_{2,4}$ is created by two Hamiltonian paths in such a way that none of the paths contains three consecutive edges in the $K_{2,4}$, then both paths contribute two vertex disjoint stars on $3$ vertices, see Figure~\ref{fig:onlyway}.  We shall call a $K_{2,4}$ in the union of two Hamiltonian paths \textit{good} if it is created as in Figure~\ref{fig:onlyway}. If two Hamiltonian paths create a $K_{2,4}$ that is not good, one of the paths contributes at least three consecutive edges to the $K_{2,4}$, and the other path necessarily completes these three edges into a $C_4$. Hence the two paths are $C_4$-creating in the $H_3$ way. Now we proceed similarly as in the proof of Lemma~\ref{lem:randomtechnique}. By Lemma~\ref{lem:largesystem}, there is a set $I$ of (undirected) Hamiltonian paths of $K_n$ such that no two paths in $I$ are $C_4$-creating in the $H^3$ way and $|I|\geq 3^{-n}n!/2$. Then, each pair of paths in $I$ either creates a good $K_{2,4}$ or it does not create a $K_{2,4}$ at all. Let $\sigma(I)$ be a version of $I$ where the labels of $K_n$ are permuted by a permutation chosen uniformly from $S_n$. Since
\[ \mathbb{E}(|X \cap \sigma(I)|)=|X|\frac{|I|}{n!/2}\geq\frac{|X|}{3^n},\]
there is a ``filtered'' set $T:=X\cap\sigma(I)$ of Hamiltonian paths of $K_n$ such that $\sqrt[n]{|T|}\to\infty$ and each pair of paths in $T$ creates a good $K_{2,4}$. Now we build a set of pairwise reversing vectors from $T$. For each path $H \in T$, let us partially label the edges of $K_n$ as follows. If an edge $e\in E(K_n)$ creates a triangle with $H$, then $e$ is labeled by the vertex of the triangle which is not incident to $e$, while the other edges are not labeled. By fixing an ordering of $E(K_n)$, we can convert the labelings into partially filled vectors satisfying the following properties. Each vector has length $\binom{n}{2}$, but only $n-2$ coordinates are filled. The filled coordinates contain $n-2$ different elements, and by the choice of $T$, the vectors are pairwise reversing. Let us denote this set of vectors by $V(n)$, then clearly $\sqrt[n]{|V(n)|}\to\infty$.

Now let us fix $n\geq 3$ for a moment, and let $m\geq\binom{n}{2}$ be arbitrary. Starting from any $v\in V(n)$, we can construct $RP(m-(n-2))$ pairwise reversing permutations $\pi$ of $[m]$ by first adding $m-\binom{n}{2}$ unfilled coordinates to $v$, and then filling the $m-(n-2)$ unfilled coordinates of the augmented vector appropriately. These sets of permutations (coming from the various $v\in V(n)$) are pairwise disjoint, and their union is still pairwise reversing by the reversing property of $V(n)$. This proves the inequality
\[RP(m)\geq |V(n)|\cdot RP(m-(n-2)),\qquad m\geq\tbinom{n}{2}.\]
From here it is straightforward to deduce that
\[\limsup\sqrt[m]{RP(m)}\geq\sqrt[n-2]{|V(n)|}.\]
On the right hand side, $\sqrt[n-2]{|V(n)|}$ is not bounded in $n$, so $\sqrt[m]{RP(m)}$ is not bounded in $m$. Hence Conjecture~\ref{conj:reversing} is false.
\end{proof}

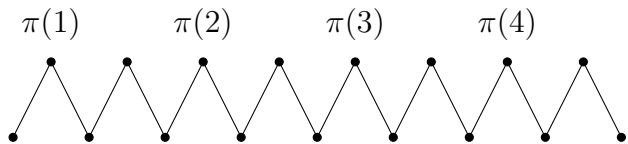
\begin{figure}
\begin{tikzpicture}[scale=1]
\tikzstyle{vertex}=[draw,circle,fill=black,minimum size=3,inner sep=0]
\foreach \j in {1,...,8}
{
\node[vertex] (y\j) at (-\j,0){};
\node[vertex] (x\j) at (-\j-0.5,1){};
\draw (-\j,0)--(-\j-0.5,1)--(-\j-1,0);
}
\node[vertex] (y9) at (-9,0){};
\node at ( -2.5,1.5)   () {$\pi(4)$};
\node at ( -4.5,1.5)   () {$\pi(3)$};
\node at ( -6.5,1.5)   () {$\pi(2)$};
\node at ( -8.5,1.5)   () {$\pi(1)$};
\end{tikzpicture}
\caption{$K_{2,4}$-creating Hamiltonian paths from pairwise reversing permutations.}
\label{fig:paths}
\end{figure}

\begin{figure}
\begin{tikzpicture}[scale=1]
\tikzstyle{vertex}=[draw,circle,fill=black,minimum size=3,inner sep=0]
\node[vertex] (x1) at (0,0){};
\node[vertex] (x2) at (1,0){};
\node[vertex] (x3) at (2,0){};
\node[vertex] (x4) at (3,0){};
\node[vertex] (y1) at (0.5,1){};
\node[vertex] (y2) at (2.5,1){};
\draw (x1)--(y1)--(x2);
\draw (x3)--(y2)--(x4);
\draw[dashed] (x1)--(y2)--(x2);
\draw[dashed] (x3)--(y1)--(x4);
\end{tikzpicture}
\caption{The only way how two Hamiltonian paths can create a \emph{good} $K_{2,4}$. The dashed edges belong to one of the paths and the rest to the other.}
\label{fig:onlyway}
\end{figure}
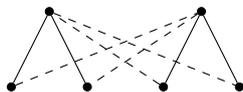

\section{Concluding remarks}

Roughly speaking, Theorems~\ref{thm:oldk=4} and \ref{thm:oldevenk} ``draw their power'' from the fact that there are $C_4$-free (or $C_{2k}$-free) graphs with many perfect matchings. Their improvements in the present paper, Theorems~\ref{thm:main} and \ref{thm:longevencycle}, ``draw their power'' from the fact that there are $C_4$-free (or $C_{2k}$-free) graphs with many Hamiltonian paths.

In the proof of Theorem~\ref{thm:main}, we employed Lemma~\ref{lem:vertextransitive} to establish the inequality
\begin{equation}\label{eq:thisisthefractionalLOL}
H_n(C_4) \leq \frac{n!/2}{\overline{H_n(C_4)}},
\end{equation}
and then we used Hamiltonian paths of a $C_4$-free graph to get an asymptotically optimal lower bound on $\overline{H_n(C_4)}$. In the following subsection we argue that using $G$-free graphs to prove lower bounds on $\overline{H_n(G)}$ does not always yield asymptotically optimal results.

\subsection{Constructions using $G$-free graphs}
From Theorem~\ref{thm:main} and \cite{oddcycle} it follows that for $k\in\{3,4,5\}$, an asymptotically best construction for $\overline{H_n(C_k)}$ can be obtained by choosing a $C_k$-free graph with enough Hamiltonian paths. This is not the case in general for $\overline{H_n(G)}$, a counterexample being $G=K_{3,3}$. Indeed, Brown~\cite{brown} and Füredi~\cite{furedi} proved that
\begin{equation}\label{eq:BrownFuredi}
ex(n,K_{3,3})=\frac{1}{2}n^{5/3}+o(n^{5/3}),
\end{equation}
which implies that the number of Hamiltonian paths in a $K_{3,3}$-free graph on $n$ vertices is at most $n^{\frac{2}{3}n+o(n)}$. To see this, observe that the number of Hamiltonian paths is at most $n$ times the product of the degrees, and then apply the inequality of arithmetic and geometric means coupled with \eqref{eq:BrownFuredi}. However, an asymptotically larger construction exists. Let $\overrightarrow{G}_3$ be a complete tripartite, directed graph with parts $X_1,X_2,X_3$ of size as equal as possible and the edges between $X_i$ and $X_j$ are directed towards $X_j$ if $j-i\equiv 1 \pmod{3}$. The number of directed Hamiltonian paths in $\overrightarrow{G}_3$ (in which there are no vertices of indegree or outdegree $2$) is $n^{n+o(n)}$, and it is an easy exercise that no two such paths contain a $K_{3,3}$ in their union.

\subsection{Using Lemma~\ref{lem:vertextransitive}}

By the proof of Lemma~\ref{lem:vertextransitive}, we see that the right hand side of \eqref{eq:thisisthefractionalLOL} is actually equal to the fractional relaxation of $H_n(C_4)$. Hence Lemma~\ref{lem:vertextransitive} is useful if there is no large gap between the clique and the fractional clique number of the underlying graph. The gap between the clique number and the fractional clique number of a vertex-transitive graph can be arbitrarily large, a longer discussion about the size of the gap compared to the number of vertices can be found in \cite{MOquestion}.

For the problem of determining $RP(n)$, Conjecture~\ref{conj:reversing} would imply such a large gap: Cibulka~\cite{cibulka} proved that $n^{\frac{n}{2}-o(n)}\leq\overline{M_{2n}(C_4)}\leq n^{\frac{n}{2}+o(n)}$, hence no improvement on the upper bound $M_{2n}(C_4) \leq n^{\frac{n}{2}+o(n)}$ can be made using only Lemma~\ref{lem:vertextransitive} and bounds on $\overline{M_{2n}(C_4)}$.

\appendix

\section{Proof of Lemma~\ref{lem:goodgraph}}\label{app:combinatorics}

Let $p$ be an odd prime, and let $\mathbb{F}_p$ denote the finite field of $p$ elements. Let the vertex set of the graph $\tilde G=\tilde G(p)$ be the vector space $\F^2$, and let two vertices $(a,c),(b,d) \in \F^2$ be adjacent in $\tilde G$ if and only if $ab=c+d$. This is a well-known construction for a dense $C_4$-free graph, and the spectral properties of $\tilde G$ have already been studied, see for example the work of Mubayi--Williford~\cite{mubayi} and Solymosi~\cite{solymosi}. Thus it is already established in the literature that $\tilde G$ satisfies all our requirements except that it contains loops. Getting rid of the loops in a way that the resulting graph is still regular with a large spectral gap is easy, and here we present one way of doing it.

We partition the vertex set of $\tilde G$ into the affine subspaces $P_a$ ($a\in\F$), where $P_a$ consists of all pairs from $\F^2$ whose first coordinate is $a$. We delete from $\tilde G$ all the edges between the vertices of each $P_a$ (in particular, we delete all loops), and we denote by $G=G(p)$ the resulting graph. We claim that this graph satisfies all the conditions of Lemma~\ref{lem:goodgraph}, and for the proof we collect first some basic properties of $\tilde G$.

\begin{lem}\label{claim:calculations} The following statements hold for the graph $\tilde G$.
\begin{enumerate}
\item For distinct $a,b\in\F$, the edges of $\tilde G$ between $P_a$ and $P_b$ form a perfect matching.
\item For $a\in\F$, the subgraph of $\tilde G$ spanned by $P_a$ consists of a perfect matching on $p-1$ vertices and the remaining single vertex with a loop.
\item For distinct $a,b\in\F$, every pair consisting of a vertex from $P_a$ and a vertex from $P_b$ has a unique common neighbor in $\tilde G$.
\item For $a\in\F$, no pair of different vertices from $P_a$ has a common neighbor in $\tilde G$.
\end{enumerate}
\end{lem}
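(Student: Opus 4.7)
My plan is to prove all four statements by a direct, unified calculation: the defining relation for an edge, $ab=c+d$, is linear in each of the four unknowns $a,b,c,d$ separately, so every ``how many neighbors'' or ``how many common neighbors'' question reduces to counting solutions of a small linear system over $\F$. I would open the proof with the remark that $(a,c)\sim(b,d)$ in $\tilde G$ if and only if $d=ab-c$, so that once three of the four coordinates are fixed, the fourth is uniquely determined.

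For part (1), fix distinct $a,b\in\F$ and note that each $(a,c)\in P_a$ has exactly one neighbor in $P_b$, namely $(b,ab-c)$, and symmetrically in the other direction, so the bipartite graph between $P_a$ and $P_b$ is a perfect matching. Part (3) is the same idea one level up: a common neighbor $(x,y)$ of $(a,c)\in P_a$ and $(b,d)\in P_b$ must satisfy the $2\times 2$ system $ax-y=c$, $bx-y=d$, whose determinant is $b-a\neq 0$, so $(x,y)$ is uniquely determined.

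For parts (2) and (4), I would fix $a\in\F$ and look at the involution $\iota_a\colon c\mapsto a^2-c$ on the second coordinate. Two vertices $(a,c),(a,c')$ of $P_a$ are adjacent in $\tilde G$ exactly when $c'=\iota_a(c)$; the fixed points of $\iota_a$ solve $2c=a^2$, which, since $p$ is odd, has the unique solution $c_0=a^2/2$. Thus inside $P_a$ the unique loop is at $(a,c_0)$ and the remaining $p-1$ vertices are paired by $\iota_a$ into a perfect matching, giving (2). For (4), a common neighbor of two distinct vertices $(a,c)\neq(a,c')$ in $P_a$ would give $ax-y=c$ and $ax-y=c'$, forcing $c=c'$, a contradiction.

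There is no real obstacle here; the only care needed is to keep parts (1)/(3) (where $a\neq b$ makes a $2\times 2$ system invertible) cleanly separate from parts (2)/(4) (where one needs $p$ odd to invert $2$ and thereby locate the single fixed point of $\iota_a$). I would present the four items in the order (1), (3), (2), (4) so that the ``between different $P_a$'s'' calculations and the ``inside a single $P_a$'' calculations are grouped together and share notation.
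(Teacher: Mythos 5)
Your proposal is correct and follows essentially the same route as the paper: all four items reduce to the same short linear calculations over $\F$, with the unique loop in $P_a$ located at $(a,a^2/2)$ using that $p$ is odd. Framing parts (1)/(3) as inverting a $2\times 2$ system and parts (2)/(4) via the involution $c\mapsto a^2-c$ is a slightly more structured presentation of the identical computation, not a different argument.
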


\begin{proof} We prove the statements one by one.
\begin{enumerate}
\item Two vertices $(a,c)\in P_a$ and $(b,d)\in P_b$ are adjacent in $\tilde G$ if and only if $ab=c+d$. For given distinct $a,b\in\F$, this equation determines a pairing $c\leftrightarrow d$ on $\F$.
\item Two vertices $(a,c)\in P_a$ and $(a,d)\in P_a$ are adjacent in $\tilde G$ if and only if $a^2=c+d$. For given $a\in\F$, this equation determines a pairing $c\leftrightarrow d$ on $\F$, and $a^2/2$ is the unique element in $\F$ whose pair is itself.
\item Two vertices $(a,c)\in P_a$ and $(b,d)\in P_b$ have $(x,y)\in\F^2$ as a common neighbor in $\tilde G$ if and only if $ax=c+y$ and $bx=d+y$. As $a\neq b$, the unique solution of these equations is $x=\frac{c-d}{a-b}$ and $y=\frac{bc-ad}{a-b}$.
\item Two vertices $(a,c)\in P_a$ and $(a,d)\in P_a$ have $(x,y)\in\F^2$ as a common neighbor in $\tilde G$ if and only if $ax=c+y=d+y$. These equations have no solution when $c\neq d$.
\end{enumerate}
\end{proof}

We shall deduce Lemma~\ref{lem:goodgraph} from Lemma~\ref{claim:calculations}. The graph $G$ is $(p-1)$-regular by parts~(1) and (2) of Lemma~\ref{claim:calculations}, and it is $C_4$-free by parts~(3) and (4) of Lemma~\ref{claim:calculations}. It remains to prove that the eigenvalue $p-1$ of $G$ has multiplicity one, and every other eigenvalue has absolute value at most $\sqrt{4p-5}$. In order to analyze the spectrum of $G$, we identify $\F$ with $\{0,1,\dotsc,p-1\}$, and we order it accordingly: $0<1<\dotsb<p-1$. Then, we order $\F^2$ lexicographically: $(a,c)<(b,d)$ if and only if $a<b$ or $a=b$ and $c<d$.

We consider the adjacency matrix $A(\tilde G)$ of $\tilde G$, where the $i$-th row (and similarly the $i$-th column) corresponds to the $i$-th vertex of $\tilde G$ in the lexicographic order. This is a $p^2\times p^2$ symmetric matrix whose square has a simple structure in terms of $p \times p$ blocks. Indeed, the $(i,j)$-entry of $A(\tilde G)^2$ is the number of common neighbors in $\tilde G$ of the (not necessarily distinct) $i$-th and $j$-th vertices, hence by Lemma~\ref{claim:calculations} we get that
\begin{equation}\label{eq:matrix}
A(\tilde G)^2=\begin{pmatrix}
p I_{p \times p} & \mathbb{1}_{p \times p} & \cdots & \mathbb{1}_{p \times p} \\
\mathbb{1}_{p \times p} &p I_{p \times p} & \cdots & \mathbb{1}_{p \times p} \\
\vdots & \vdots & \ddots & \vdots \\
\mathbb{1}_{p \times p} & \mathbb{1}_{p \times p} & \cdots & p I_{p \times p}
\end{pmatrix},
\end{equation}
where $I_{p \times p}$ is the identity matrix, and $\mathbb{1}_{p \times p}$ is the matrix containing only ones.

The structure of $A(G)^2$ can be understood by considering the difference between common neighbors in $\tilde G$ and $G$. Let $\mathbb{1}_{p \times p}^{\ominus 2}$ be the class of matrices with exactly $p-2$ ones in every row and column, and zeros elsewhere. By changing $\tilde G$ to $G$, the diagonal blocks in equation \eqref{eq:matrix} change from $p I_{p \times p}$ into $(p-1) I_{p \times p}$, while the off-diagonal blocks change from $\mathbb{1}_{p \times p}$ into matrices lying in $\mathbb{1}_{p \times p}^{\ominus 2}$. Indeed, if $a,b\in\F$ are distinct, then for every vertex $(a,c)\in P_a$ there are precisely two vertices $(b,d)\in P_b$ which have no common neighbor with $(a,c)$ in $G$. These are, within $\tilde G$, the neighbor in $P_b$ of the neighbor in $P_a$ of $(a,c)$, and the neighbor in $P_b$ of the neighbor in $P_b$ of $(a,c)$. Note that these two vertices $(b,d)\in P_b$ are distinct by part~(3) of Lemma~\ref{claim:calculations}. Explicitly, these are the two vertices $(b,d)\in P_b$ such that $\frac{c-d}{a-b}$ equals $a$ or $b$ (cf.\ proof of part~(3) of Lemma~\ref{claim:calculations}). To summarize, the analogue of equation \eqref{eq:matrix} is the relation
\begin{equation}\label{eq:matrix2}
A(G)^2\in\begin{pmatrix}
(p-1) I_{p \times p} & \mathbb{1}_{p \times p}^{\ominus 2} & \cdots & \mathbb{1}_{p \times p}^{\ominus 2} \\
\mathbb{1}_{p \times p}^{\ominus 2} &(p-1) I_{p \times p} & \cdots & \mathbb{1}_{p \times p}^{\ominus 2} \\
\vdots & \vdots & \ddots & \vdots \\
\mathbb{1}_{p \times p}^{\ominus 2} & \mathbb{1}_{p \times p}^{\ominus 2} & \cdots & (p-1) I_{p \times p}
\end{pmatrix}.
\end{equation}

Let $V$ be the space of real column vectors of length $p^2$, and let $v\in V$ be the column vector with entries $1$. Note that $v$ is an eigenvector of $A(G)$ with eigenvalue $p-1$, because $G$ is $(p-1)$-regular. By the spectral theorem for symmetric matrices, it suffices to show that every eigenvector $w\in V$ of $A(G)$ orthogonal to $v$ has eigenvalue $\lambda\leq\sqrt{4p-5}$. Clearly, these conditions imply that $w$ is an eigenvector of the matrix
\[S(G):=A(G)^2-\mathbb{1}_{p^2 \times p^2}\]
with eigenvalue $\lambda^2$. By \eqref{eq:matrix2} we also see that, in each row of $S(G)$, the sum of the absolute values of the entries equals
$(p-2)+(p-1)+2(p-1)=4p-5$. Therefore, by a well-known inequality on the spectral radius (see \cite[Proposition~7.6]{serre}), we conclude that
\[\lambda^2\leq{\|S(G)\|}_\infty=4p-5.\]
The proof of Lemma~\ref{lem:goodgraph} is complete.

\begin{bibdiv}
\begin{biblist}

\bib{primes}{article}{
    AUTHOR = {R. C. Baker},
    AUTHOR = {G. Harman},
    AUTHOR = {J. Pintz},
     TITLE = {The difference between consecutive primes. {II}},
   JOURNAL = {Proc. London Math. Soc. (3)},
    VOLUME = {83},
      YEAR = {2001},
     PAGES = {532--562},
}

\bib{intsurvey}{article}{
    AUTHOR = {P. Borg},
     TITLE = {Intersecting families of sets and permutations: a survey},
   JOURNAL = {Int. J. Math. Game Theory Algebra},
    VOLUME = {21},
      YEAR = {2012},
     PAGES = {543--559},
}

\bib{brown}{article}{
    AUTHOR = {W. G. Brown},
     TITLE = {On graphs that do not contain a {T}homsen graph},
   JOURNAL = {Canad. Math. Bull.},
    VOLUME = {9},
      YEAR = {1966},
     PAGES = {281--285},
}

\bib{additive}{article}{
    AUTHOR = {L. S. Chandran},
    AUTHOR = {D. Rajendraprasad},
    AUTHOR = {N. Singh},
     TITLE = {On additive combinatorics of permutations of {$\mathbb{Z}_n$}},
   JOURNAL = {Discrete Math. Theor. Comput. Sci.},
    VOLUME = {16},
      YEAR = {2014},
     PAGES = {35--40},
}

\bib{cibulka}{article}{
    AUTHOR = {J. Cibulka},
     TITLE = {Maximum size of reverse-free sets of permutations},
   JOURNAL = {SIAM J. Discrete Math.},
    VOLUME = {27},
      YEAR = {2013},
     PAGES = {232--239},
}

\bib{k=4}{article}{
    AUTHOR = {G. Cohen},
    AUTHOR = {E. Fachini},
    AUTHOR = {J. Körner},
     TITLE = {Path separation by short cycles},
   JOURNAL = {J. Graph Theory},
    VOLUME = {85},
      YEAR = {2017},
     PAGES = {107--114},
}

\bib{tintersect}{article}{
    AUTHOR = {D. Ellis},
    AUTHOR = {E. Friedgut},
    AUTHOR = {H. Pilpel},
     TITLE = {Intersecting families of permutations},
   JOURNAL = {J. Amer. Math. Soc.},
    VOLUME = {24},
      YEAR = {2011},
     PAGES = {649--682},
}

\bib{frankldeza}{article}{
    AUTHOR = {P. Frankl},
    AUTHOR = {M. Deza},
     TITLE = {On the maximum number of permutations with given maximal or
              minimal distance},
   JOURNAL = {J. Combinatorial Theory Ser. A},
    VOLUME = {22},
      YEAR = {1977},
     PAGES = {352--360},
}

\bib{furedi}{article}{
    AUTHOR = {Z. Füredi},
     TITLE = {An upper bound on {Z}arankiewicz' problem},
   JOURNAL = {Combin. Probab. Comput.},
    VOLUME = {5},
      YEAR = {1996},
     PAGES = {29--33},
}

\bib{reversefree}{article}{
    AUTHOR = {Z. Füredi},
    AUTHOR = {I. Kantor},
    AUTHOR = {A. Monti},
    AUTHOR = {B. Sinaimeri},
     TITLE = {On reverse-free codes and permutations},
   JOURNAL = {SIAM J. Discrete Math.},
    VOLUME = {24},
      YEAR = {2010},
     PAGES = {964--978},
}

\bib{godsil}{book}{
    AUTHOR = {C. Godsil},
    AUTHOR = {G. Royle},
     TITLE = {Algebraic graph theory},
    SERIES = {Graduate Texts in Mathematics},
    VOLUME = {207},
 PUBLISHER = {Springer-Verlag, New York},
      YEAR = {2001},
}

\bib{heathbrown}{article}{
    AUTHOR = {D. R. Heath-Brown},
     TITLE = {A mean value estimate for real character sums},
   JOURNAL = {Acta Arith.},
    VOLUME = {72},
      YEAR = {1995},
     PAGES = {235--275},
}

\bib{ingham}{article}{
    AUTHOR = {A. E. Ingham},
     TITLE = {On the difference between consecutive primes},
   JOURNAL = {Quart. J. Math.},
    VOLUME = {8},
      YEAR = {1937},
     PAGES = {255--266},
}

\bib{triangle}{article}{
    AUTHOR = {I. Kovács},
    AUTHOR = {D. Soltész},
     TITLE = {Triangle-different {H}amiltonian paths},
   JOURNAL = {J. Combin. Theory Ser. B},
    VOLUME = {129},
      YEAR = {2018},
     PAGES = {1--17},
}

\bib{oddcycle}{article}{
    AUTHOR = {I. Kovács},
    AUTHOR = {D. Soltész},
     TITLE = {On $k$-neighbor separated permutations},
   JOURNAL = {SIAM J. Discrete Math.},
    VOLUME = {33},
      YEAR = {2019},
     PAGES = {1691--1711},
}
\bib{kornertoldme}{article}{
    AUTHOR = {J. Körner},
     TITLE = {Personal communication}
}

\bib{colliding}{article}{
    AUTHOR = {J. Körner},
    AUTHOR = {C. Malvenuto},
     TITLE = {Pairwise colliding permutations and the capacity of infinite
              graphs},
   JOURNAL = {SIAM J. Discrete Math.},
    VOLUME = {20},
      YEAR = {2006},
     PAGES = {203--212},
}

\bib{KMS}{article}{
    AUTHOR = {J. Körner},
    AUTHOR = {C. Malvenuto},
    AUTHOR = {G. Simonyi},
     TITLE = {Graph-different permutations},
   JOURNAL = {SIAM J. Discrete Math.},
    VOLUME = {22},
      YEAR = {2008},
     PAGES = {489--499},
}

\bib{komesi}{article}{
    AUTHOR = {J. Körner},
    AUTHOR = {S. Messuti},
    AUTHOR = {G. Simonyi},
     TITLE = {Families of graph-different {H}amilton paths},
   JOURNAL = {SIAM J. Discrete Math.},
    VOLUME = {26},
      YEAR = {2012},
     PAGES = {321--329},
}

\bib{locsep}{article}{
    AUTHOR = {J. Körner},
    AUTHOR = {A. Monti},
     TITLE = {Families of locally separated {H}amilton paths},
   JOURNAL = {J. Graph Theory},
    VOLUME = {88},
      YEAR = {2018},
     PAGES = {402--410},
}

\bib{degree4}{article}{
    AUTHOR = {J. Körner},
    AUTHOR = {I. Muzi},
     TITLE = {Degree-doubling graph families},
   JOURNAL = {SIAM J. Discrete Math.},
    VOLUME = {27},
      YEAR = {2013},
     PAGES = {1575--1583},
}

\bib{KSS}{article}{
    AUTHOR = {J. Körner},
    AUTHOR = {G. Simonyi},
    AUTHOR = {B. Sinaimeri},
     TITLE = {On types of growth for graph-different permutations},
   JOURNAL = {J. Combin. Theory Ser. A},
    VOLUME = {116},
      YEAR = {2009},
     PAGES = {713--723},
}

\bib{krivelevich}{article}{
    AUTHOR = {M. Krivelevich},
     TITLE = {On the number of {H}amilton cycles in pseudo-random graphs},
   JOURNAL = {Electron. J. Combin.},
    VOLUME = {19},
      YEAR = {2012},
     PAGES = {Paper 25, 14 pp.},
}

\bib{lazebnik}{article}{
    AUTHOR = {F. Lazebnik},
    AUTHOR = {V. A. Ustimenko},
    AUTHOR = {A. J. Woldar},
     TITLE = {A new series of dense graphs of high girth},
   JOURNAL = {Bull. Amer. Math. Soc. (N.S.)},
    VOLUME = {32},
      YEAR = {1995},
     PAGES = {73--79},
}

\bib{lubotzky}{article}{
    AUTHOR = {A. Lubotzky},
    AUTHOR = {R. Phillips},
    AUTHOR = {P. Sarnak},
     TITLE = {Ramanujan graphs},
   JOURNAL = {Combinatorica},
    VOLUME = {8},
      YEAR = {1988},
     PAGES = {261--277},
}

\bib{margulis}{article}{
    AUTHOR = {G. A. Margulis},
     TITLE = {Explicit group-theoretic constructions of combinatorial
              schemes and their applications in the construction of
              expanders and concentrators},
   JOURNAL = {Problemy Peredachi Informatsii},
    VOLUME = {24},
      YEAR = {1988},
     PAGES = {51--60},
}

\bib{MOquestion}{article}{
    AUTHOR = {MathOverflow},
     TITLE = {The minimum of alpha times omega for vertex-transitive graphs},
    EPRINT = {https://mathoverflow.net/q/258945}
}

\bib{mubayi}{article}{
    AUTHOR = {D. Mubayi},
    AUTHOR = {J. Williford},
     TITLE = {On the independence number of the Erd\H os-R\'enyi and projective norm graphs and a related hypergraph},
   JOURNAL = {J. Graph Theory},
    VOLUME = {56},
      YEAR = {2007},
     PAGES = {113--127},
}

\bib{serre}{book}{
    AUTHOR = {D. Serre},
     TITLE = {Matrices. Theory and applications},
    SERIES = {Graduate Texts in Mathematics},
    VOLUME = {216},
   EDITION = {2nd ed.},
 PUBLISHER = {Springer-Verlag, New York},
      YEAR = {2010},
}

\bib{evencycle}{article}{
    AUTHOR = {D. Soltész},
     TITLE = {Even cycle creating paths},
   JOURNAL = {J. Graph Theory, to appear},
}

\bib{solymosi}{incollection}{
    AUTHOR = {J. Solymosi},
     TITLE = {Incidences and the spectra of graphs},
 BOOKTITLE = {Combinatorial number theory and additive group theory},
    SERIES = {Adv. Courses Math. CRM Barcelona},
     PAGES = {299--314},
 PUBLISHER = {Birkh\"{a}user Verlag, Basel},
      YEAR = {2009},
}

\end{biblist}
\end{bibdiv}

\end{document}